\numberwithin{equation}{section}
\newtheorem{itheorem}{Theorem}
\newtheorem{theorem}{Theorem}[section]
\newtheorem{proposition}[theorem]{Proposition}
\newtheorem{lemma}[theorem]{Lemma}
\theoremstyle{remark}
\newtheorem{example}[theorem]{Example}
\newtheorem{remark}[theorem]{Remark}
\newtheorem{definition}[theorem]{Definition}
\newcounter{FNC}[page]
\def\fauxfootnote#1{{\addtocounter{FNC}{2}\Magenta{$^\fnsymbol{FNC}$}%
     \let\thefootnote\relax\footnotetext{\Magenta{$^\fnsymbol{FNC}$#1}}}}
\newcommand{\QED}{\includegraphics[height=10pt]{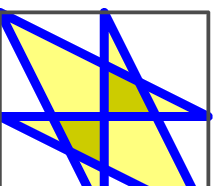}}
\newcommand{\C}{{\bf{C}}}
\renewcommand{\P}{{\bf{P}}}
\newcommand{\R}{{\bf{R}}}
\newcommand{\T}{{\bf{T}}}
\newcommand{\Z}{{\bf{Z}}}
\newcommand{\calA}{{\mathcal{A}}}
\newcommand{\calP}{{\mathcal{P}}}
\newcommand{\tp}{\widetilde{p}\hspace{1.2pt}}
\newcommand{\tpp}{{\widetilde{p}\hspace{1.6pt}'\hspace{-3.4pt}}}
\newcommand{\tq}{\widetilde{q}}
\newcommand{\tqp}{{\widetilde{q}\hspace{1.6pt}'\hspace{-3.4pt}}}
\newcommand{\ba}{{\bf a}}
\newcommand{\bb}{{\bf b}}
\newcommand{\bc}{{\bf c}}
\newcommand{\bd}{{\bf d}}
\newcommand{\be}{{\bf e}}
\newcommand{\bbf}{{\bf f}}
\newcommand{\bg}{{\bf g}}
\newcommand{\bh}{{\bf h}}
\newcommand{\bu}{{\bf u}}
\newcommand{\bv}{{\bf v}}
\newcommand{\bw}{{\bf w}}
\newcommand{\bO}{{\bf 0}}
\newcommand{\pr}{{\it pr}}
\DeclareMathOperator{\Arg}{Arg}
\DeclareMathOperator{\sgn}{sgn}
\DeclareMathOperator{\sign}{sign}
\DeclareMathOperator{\cone}{cone}
\DeclareMathOperator{\conv}{conv}
\DeclareMathOperator{\vol}{vol}
\newcommand{\defcolor}[1]{\Blue{#1}}
\newcommand{\demph}[1]{\defcolor{{\sl #1}}}
\title{Discriminant coamoebas through homology}
\author[Passare]{Mikael Passare$^\dagger$}
\address{Mikael Passare\\
         Department of Mathematics\\
         Stockholm University\\
         SE-106 91 Stockholm\\
         Sweden}
\urladdr{\url{http://www.math.su.se/~passare/}}
\author[Sottile]{Frank Sottile}
\address{Frank Sottile \\
         Department of Mathematics\\
         Texas A\&M University\\
         College Station\\
         Texas \ 77843\\
         USA}
\email{sottile@math.tamu.edu}
\urladdr{\url{http://www.math.tamu.edu/~sottile}}
\thanks{Research of Sottile supported in part by NSF grant DMS-1001615 and the Institut
  Mittag-Leffler} 
\subjclass[2010]{14H45, 14T05}
\begin{document}

\begin{abstract}
 Understanding the complement of the coamoeba of a (reduced) $A$-discriminant is one approach
 to studying the monodromy of solutions to the corresponding system of $A$-hypergeometric
 differential equations. 
 Nilsson and Passare described the structure of the coamoeba and its complement (a
 zonotope) when the reduced $A$-discriminant is a function of two variables.
 Their main result was that the coamoeba and zonotope form a cycle which is equal to
 the fundamental cycle of the torus, multiplied by the normalized volume of the set $A$ of
 integer vectors.
 That proof only worked in dimension two.
 Here, we use simple ideas from topology to give a new proof of this result in dimension two,
 one which can be generalized to all dimensions.
\end{abstract}

\maketitle

%
\section*{Introduction}

 $A$-hypergeometric functions, which are solutions to $A$-hypergeometric systems of
 differential equations~\cite{GKZ89,GKZ94,SST}, enjoy two complimentary analytical formulae
 which together give an approach to studying the monodromy of the
 solutions~\cite{Beukers} at non-resonant parameters.
 One formula is as explicit power series whose convergence domains in
 $\C^{N+1}$ have an action of the group $\T^{N+1}$ of phases.
 These power series form a basis of solutions, with known local monodromy around loops from 
 $\T^{N+1}$.  
 Another formula is as $A$-hypergeometric Mellin-Barnes
 integrals~\cite{Nilsson} evaluated at phases $\theta\in\T^{N+1}$.
 When the Mellin-Barnes integrals give a basis of solutions, they may be
 used to glue together the local monodromy groups and determine a subgroup
 of the monodromy group, which may sometimes be the full monodromy group.

 Here, $A\subset\Z^n$ consists of $N{+}1$ integer vectors that generate $\Z^n$.
 Considering $\Z^n\subset\Z^{n+1}$ as the vectors with first coordinate 1, we regard $A$ as a
 collection of $N{+}1$ vectors in $\Z^{n+1}$.
 The $A$-discriminant is a multihomogeneous polynomial in $N{+}1$ variables with $n{+}1$
 homogeneities corresponding to $A$.
 Removing these homogeneities gives the reduced $A$-discriminant, $D_B$, which is a hypersurface in 
 $\C^d$ ($d:=N{-}n$) that depends upon a vector configuration $B\subset\Z^d$ Gale dual to
 $A$. 
 This reduction corresponds to a homomorphism $\beta\colon(\C^*)^{N+1}\to(\C^*)^d$ and 
 induces a corresponding map $\Arg(\beta)$ on phases.

 The Mellin-Barnes integrals at $\theta\in\T^{N+1}$ give a basis of solutions when
 $\Arg(\beta)(\theta)$ has a neighborhood in $\T^{d}$ with the property that no point of
 $D_B$ has a phase lying in that neighborhood~\cite{Nilsson}.
 By results in~\cite{Johansson,NS}, this means that $\Arg(\beta)(\theta)$ lies in the
 complement of the closure of the coamoeba $\calA_B$ of $D_B$.

 When $d=2$, the closure of $\calA_B$ and its complement were described
 in~\cite{NP10} as topological chains in $\T^2$ (induced from natural chains in its
 universal cover $\R^2$, where $\T^2=(\R/2\pi\Z)^2$).
 The closure of the coamoeba is an explicit chain depending on 
 $B$.
 Its edges coincide with the edges of the zonotope $Z_B$ generated by $B$.
%
%
 The main result of~\cite{NP10} is the following theorem.

\begin{itheorem}\label{T:NP}
 The sum of the coamoeba chain $\overline{\calA_B}$ and the zonotope $Z_B$ forms a two-dimensional 
 cycle in $\T^2$ that is equal to $n!\vol(A)$ times the fundamental cycle.
\end{itheorem}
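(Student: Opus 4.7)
\medskip

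\noindent\textbf{Proof proposal.} The plan is to exploit that $H_2(\T^2;\Z)\cong\Z$, generated by the fundamental class: any integral $2$-cycle in $\T^2$ is an integer multiple of the fundamental cycle, and this integer is the signed local degree at any single generic point. The argument thus splits into two parts. First, verify that $\overline{\calA_B}+Z_B$ is a cycle. Second, evaluate its homology class by computing the local degree at one convenient generic phase.

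For the cycle property, I would lift both chains from $\T^2$ to the universal cover $\R^2$, and use the fact recalled just before the theorem that the edges of $\overline{\calA_B}$ coincide with the edges of $Z_B$. What remains is an edge-by-edge check that, along each shared edge, the contributions to $\partial\overline{\calA_B}$ and to $\partial Z_B$ come with opposite orientations so that they cancel in $\partial(\overline{\calA_B}+Z_B)$. This should reduce to the local combinatorial description of $\overline{\calA_B}$ near an edge of $Z_B$ already established in~\cite{NP10}; it is a purely local check and no genuinely new input is needed here.

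For the homology class, fix a generic $\theta\in\T^2$ avoiding the codimension-one support of $\overline{\calA_B}\cup\partial Z_B$. The multiplier we want is
\[
 k \;=\; (\text{signed sheet count of } \overline{\calA_B} \text{ over } \theta) \;+\; (\text{signed sheet count of } Z_B \text{ over } \theta).
\]
The zonotope contribution is combinatorial: lifting $\theta$ to $\R^2$ and counting $\Z^2$-translates of $\theta$ that lie in the Minkowski sum of the segments $[0,b_i]$, with signs, which sums over pairs in $B$ of signed $2\times 2$ determinants. The coamoeba contribution is the local degree of the Horn--Kapranov-type parametrization of $D_B$ composed with $\Arg$. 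I would then identify the sum with the normalized volume $n!\vol(A)$, using that the latter is the degree of the projective toric variety $X_A$ and hence governs both the degree of the discriminant map and the volume of~$Z_B$.

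The main obstacle is the orientation and sign bookkeeping in the second part: $\overline{\calA_B}$ is not a manifold and its Horn--Kapranov parametrization is not injective, so the signed local multiplicity must be defined and tracked carefully, and one must see the additive combination with the zonotope count collapse precisely to $n!\vol(A)$. The reason this topological framework is attractive is that it is dimension-independent: once a chain-level description of $\overline{\calA_B}\subset\T^d$ is available for $d\geq 3$, the two-step scheme (boundary cancellation, then local degree at a generic point) extends without modification, which matches the paper's claim that the method generalizes.
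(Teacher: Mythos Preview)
Your approach is genuinely different from the paper's, and in fact it is essentially the strategy of the original proof in~\cite{NP10} that this paper was written to replace. You work directly in $\T^2$: check boundary cancellation edge by edge, then compute the signed local degree at a generic point by separately counting coamoeba sheets and zonotope lattice translates, and finally identify the sum with $n!\vol(A)$. The paper instead \emph{lifts} everything to $\T^N$ via the Horn--Kapranov factorization $\Psi_B=\beta\circ\Phi_B$. The coamoeba $\calA_B$ is the image under $\Arg(\beta)\colon\T^N\to\T^2$ of the coamoeba of a real line $\ell_B\subset\P^N$, and the paper constructs a piecewise-linear \emph{zonotope chain} $Z(\ell_B)\subset\T^N$ as a cone over $\partial\overline{\calA(\ell_B)}$. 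The main work is Theorem~\ref{Th:homology}, which computes $[\overline{\calA(\ell_B)}+Z(\ell_B)]\in H_2(\T^N;\Z)$ by projecting to each coordinate $2$-plane and reducing to the dozen explicit cases of a real line in $\P^2$. One then pushes forward by $\Arg(\beta)_*$, checks that $Z(\ell_B)\mapsto Z_B$, and the identification with $d_B[\T^2]$ falls out of the combinatorial Lemma~\ref{L:independent} rather than from any sheet-counting over $\calA_B$.

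Your last paragraph inverts the logic of the paper. The direct local-degree computation in $\T^2$ is precisely what the paper asserts does \emph{not} generalize (see the paragraph following the statement of Theorem~\ref{T:NP}). The obstacle you flag---that $\overline{\calA_B}$ is not a manifold and the parametrization is not injective, so the signed multiplicities must be tracked by hand---is exactly what becomes intractable for $d\geq 3$. The lift to $\T^N$ is the new idea that buys generality: the coamoeba of a real line and its zonotope chain have an explicit piecewise-linear description independent of $d$, and the homology class is determined by coordinate projections; the hope (stated in the paper's closing remark) is to replace the line by a real $(d{-}1)$-plane. So your two-step scheme may well be carried through for $d=2$ (modulo the identification step you leave open), but the claim that it is the dimension-independent route is backwards.
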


 Here, $n!\vol(A)$ is the normalized volume of the convex hull of $A$, which is the dimension
 of the space of solutions to the (non-resonant) $A$-hypergeometric system.
 The zonotope $Z_B$ gives points in the complement of $\calA_B$, by Theorem~\ref{T:NP}.
 Its proof in~\cite{NP10} only works when $d=2$ and it is not clear
 how to generalize it to $d>2$. 
 However, any such generalization would be important, for Mellin-Barnes
 integrals at a set of phases $\theta$ where $\Arg(\beta)(\theta)$ are distinct points of
 $Z_B$ with the same image in $\T^d$ are linearly independent.

 We give a proof of Theorem~\ref{T:NP} which explains the occurrence of the
 zonotope and can be generalized to higher dimensions.
 This proof uses the Horn-Kapranov parametrization of the $A$-discriminant~\cite{K91},
 which implies that the discriminant coamoeba is the image of the coamoeba of a line $\ell_B$ in
 $\P^N$ under the map $\Arg(\beta)$.
 We construct a piecewise linear \demph{zonotope chain} in $\T^N$ (the quotient of
 $\T^{N+1}$ by the diagonal torus) which is a cone over the
 boundary of the coamoeba of $\ell_B$, and compute the homology class of the sum of the coamoeba 
 and this zonotope chain. 
 This gives a formula for the image of this cycle under $\Arg(\beta)$, which 
 we show is $n!\vol(A)$ times the fundamental cycle of $\T^2$.
 Theorem~\ref{T:NP} follows as the map $\Arg(\beta)$ sends the coamoeba of $\ell$ to the
 coamoeba $\calA_B$ of $D_B$ and sends the zonotope chain to $Z_B$.

 While for $A$-discriminants, the set $A$ consists of distinct integer vectors and
 consequently its Gale dual $B$  generates $\Z^2$ and has no  
 two vectors parallel, we establish Theorem~\ref{T:NP} 
 in the greater generality of any finite multiset $B$ of
 integer vectors in $\Z^2$ with sum $\bO$ that spans $\R^2$.
 This generality is useful in our primary application to hypergeometric systems,
 for example the classical systems of Appell~\cite{appell} and Lauricella~\cite{lauricella}
 may be expressed as $A$-hypergeometric systems with repeated vectors in the Gale dual $B$.
 In this setting, we replace the reduced $A$-discriminant by the Horn-Kapranov
 parametrization given by the vectors $B$, and study the coamoeba $\calA_B$ of the
 image, which is also written $D_B$. 
 The normalized volume $n!\vol(A)$ of the configuration $A$ is replaced by a quantity $d_B$
 that depends upon the vectors in $B$.

 We collect some preliminaries in Section~\ref{S:one}.
 In Section~\ref{S:realline} we study the coamoeba of a line in $\P^N$ defined over the real
 numbers and define its associated zonotope chain.
 Our main result is a computation of the homology class of the cycle formed by these two
 chains. 
 In Section~\ref{S:three} we show that under the map $\Arg(\beta)$ the coamoeba and zonotope
 chains map to the coamoeba $\calA_B$ and the zonotope $Z_B$,
 and a simple application of the result in Section~\ref{S:realline} shows that the homology
 class of $\overline{\calA_B}+Z_B$ is $d_B$ times the fundamental cycle of $\T^2$.\medskip

\noindent{\bf Remark.}
 This approach to reduced $A$-discriminant coamoebas and their complements was developed during the
 Winter 2011 semester at the Institut Mittag-Leffler, with the main result obtained in August 
 2011, along with a sketch of a program to extend it to $d\geq 2$.
 With the tragic death of Mikael Passare on 15 September 2011, the task of completing this
 paper fell to the second author, and the program extending these results is being carried out
 in collaboration with Mounir Nisse.

%
\section{Coamoebas and cohomology of tori}\label{S:one}

Throughout $N$ will be an integer strictly greater than 1.
Let $\defcolor{\P^N}$ be $N$-dimensional complex projective space, which will
always have a preferred set of coordinates  $[x_1:\dotsb:x_N:x_{N+1}]$ (up to reordering).
Similarly, $\defcolor{\C^N}$, $\defcolor{(\C^*)^N}$, $\defcolor{\R^N}$, and $\defcolor{\Z^N}$
are $N$-tuples of complex numbers, non-zero complex numbers, real numbers, and integers, all
with corresponding preferred coordinates.
We will write $\be_i$ for the $i$th basis vector in a corresponding ordered basis.

The argument map $\C^*\ni z=re^{\sqrt{-1}\theta}\mapsto\theta\in\T:=\R/2\pi\Z$ induces an
argument map $\defcolor{\Arg}\colon(\C^*)^N\to\T^N$.
To a subvariety $X\subset\P^N$ (or $\C^N$ or $(\C^*)^N$) we associate its \demph{coamoeba}
$\defcolor{\calA(X)}\subset\T^N$ which is the image of $X\cap(\C^*)^N$ under $\Arg$.
The closure of the coamoeba $\calA(X)$ was studied in~\cite{Johansson,NS}.
This closure contains  $\calA(X)$, together with all limits of arguments of unbounded
sequences in $X\cap(\C^*)^N$, which constitute the 
\demph{phase limit set of $X$, $\calP^\infty(X)$}. 
The main result of~\cite{NS} (proven when $X$ is a complete intersection in~\cite{Johansson})
is that  $\calP^\infty(X)$ is the union of the coamoebas of all initial degenerations of
$X\cap(\C^*)^N$. 

Lines in $\C^3$ were studied in~\cite{NS}, and the arguments there imply some basic facts about
coamoebas of lines.
When $X=\defcolor{\ell}\subset\C^N$ is a line which is not parallel to a sum of coordinate
directions 
($\be_{i_1}+\dotsb+\be_{i_s}$ for some subset $\{i_1,\dotsc,i_s\}$ of $\{1,\dotsc,N\}$), its
coamoeba is two-dimensional and its phase limit set is a union of at most $N{+}1$
one-dimensional subtori of $\T^N$, one for each point of $\ell$ at infinity, whose directions are
parallel to sums of coordinate directions.
If $\ell'\subset\C^M$ ($M<N$) is the image of $\ell$ under a coordinate projection, then the
coamoeba $\calA(\ell')$ is the image of $\calA(\ell)$ under the induced projection.
If $\ell'$ is not parallel to a sum of coordinate directions, then the map
$\overline{\calA(\ell)}\to\overline{\calA(\ell')}$ is an injection except for those components
of the phase limit set which are collapsed to points. \medskip

The integral cohomology of the compact torus $\T^N$ is the exterior algebra $\wedge^*\Z^N$.
Under the natural identification of homology with the linear dual of cohomology
(which is again  $\wedge^*\Z^N$), we will write $\be_i$ for the fundamental $1$-cycle
$[\T_i]$ of the coordinate circle $\T_i:= 0^{i-1}\times \T\times 0^{N-i}$ and $\be_i\wedge \be_j$
is the fundamental cycle $[\T_{i,j}]$ of the coordinate 2-torus  $\T_{i,j}\simeq\T^2$ in the
directions $i$ and $j$ with the implied orientation. 
Given a continuous map $\rho\colon\T^N\to\T^2$, the induced map in homology is 
$\rho_*\colon H_*(\T^N,\Z)\to H_*(\T^2,\Z)$ where $\rho_*(\be_i)=[\rho(\T_i)]$, where 
we interpret $[\rho(\T_i)]$ as a cycle---the set of points in 
$\rho(\T_i)$ over which $\rho$ has degree $n$ will appear in $[\rho(\T_i)]$ with coefficient
$n$.
By the identification of $H_*(\T^N,\Z)$ with $\wedge^*\Z^N$,
such a map is determined by its action on $H_1(\T^N,\Z)$, where it is an integer linear map
$\Z^N\to\Z^2$.

%
\section{The coamoeba and zonotope chains of a real line}\label{S:realline}

We study the coamoeba $\calA(\ell)$ of a line $\ell$ in $\P^N$ defined by real
equations.
Its closure $\overline{\calA(\ell)}$ is a two-dimensional chain in $\T^N$ whose boundary consists
of at most $N{+}1$ one-dimensional subtori parallel to sums of coordinate directions.
We describe a piecewise linear two-dimensional chain---the \demph{zonotope chain} of
$\ell$---which has the same boundary as the coamoeba, but with opposite orientation.
The union of the coamoeba and the zonotope chain forms a cycle whose 
homology class we compute.

The line $\ell$ has a parametrization
\[
   \Phi\ \colon\ \P^1\ni z\ \longmapsto\ 
    [b_1(z)\,:\, b_2(z)\,:\, \dotsb\,:\,b_{N+1}(z)]\ \in\ \P^N\,,
\]
where $b_1,\dotsc,b_{N+1}$ are real linear forms with zeroes
$\xi_1,\dotsc,\xi_{N+1}\in\R\P^1$.
The formulation and statement of our results about the coamoeba of $\ell$ will be with
respect to particular orderings of the forms $b_i$, which we now describe.

\begin{definition}\label{D:conventions}
Suppose that these zeroes are in a weakly increasing cyclic order on $\R\P^1$,
 \begin{equation}\label{Eq:A}
   \xi_1\ \leq\ \xi_2\ \leq\ \dotsb\ \leq\ \xi_{N+1}\,.
 \end{equation}
Next, identify $\P^1\smallsetminus\{\xi_{N+1}\}$ with
$\C$, so that $\xi_{N+1}$ is the point $\infty$ at infinity, and suppose that the
distinct zeroes are
 \begin{equation}\label{Eq:B}
   \zeta_1\ <\ \zeta_2\ <\ \dotsb\ <\ \zeta_M\ <\ \zeta_{M{+}1}\ =\ \infty\,.
 \end{equation}
(Note that $M\leq N$.)
Let $\R=\R\P^1\smallsetminus\{\infty\}$ and consider the forms $b_i$ as affine functions on
$\R$.
Fix a scaling of these functions so that $b_{N+1}=1$.
On the interval $(-\infty,\zeta_1)$ the sign of each function $b_i$ is constant.
Define $\defcolor{\sgn_i}\in\{\pm 1\}$ to be this sign.

By~\eqref{Eq:A} and~\eqref{Eq:B}, there exist numbers
$1=m_1<\dotsb<m_{M+1}<m_{M+2}=N{+}2$ such that $b_i(\zeta_j)=0$ if and only if 
$i\in[m_j,m_{j+1})$.
We further suppose that on each of these intervals $[m_j,m_{j+1})$ the signs $\sgn_i$ 
are weakly ordered.
Specifically, there are integers $n_1,\dotsc,n_{M{+}1}$ with $m_j< n_j \leq m_{j+1}$ such that
one of the following holds
 \begin{eqnarray}
   \sgn_{m_j}=\sgn_{m_j+1}=\dotsb=\sgn_{n_j-1}=-1 &<
   &1=\sgn_{n_j}=\dotsb=\sgn_{m_{j+1}-1}\,,\makebox[.1in][l]{\qquad or}\label{Eq:inc}\\
      \sgn_{m_j}=\sgn_{m_j+1}=\dotsb=\sgn_{n_j-1}=1 &>
   &-1=\sgn_{n_j}=\dotsb=\sgn_{m_{j+1}-1}\,,\label{Eq:dec}
 \end{eqnarray}
for $j=1,\dotsc,M{+}1$.
If $n_j=m_{j+1}$, then all the signs are the same; otherwise both
signs occur.
Since $b_{N+1}=1$, either~\eqref{Eq:inc} occurs with $n_{M+1}\leq N{+}1$
or~\eqref{Eq:dec} occurs with $n_{M+1}=N{+}1$.
\hfill\QED
\end{definition}

The point $\Arg(b_1(z),\dotsc,b_N(z))\in\T^N$ is constant for $z$ in
each interval of $\R^1\smallsetminus\{\zeta_1,\dotsc,\zeta_M\}$.
Let $\defcolor{p_1}:=(\arg(\sgn_i)\mid i=1,\dotsc,N)$ be the point coming from the
interval $(-\infty,\zeta_1)$, and for each $j=1,\dotsc,M$, let \defcolor{$p_{j+1}$} be the
point coming from the interval $(\zeta_j,\zeta_{j+1})$.
These $M{+}1$ points $p_1,\dotsc,p_{M+1}$ of $\T^N$ are the vertices of the coamoeba $\calA(\ell)$
of $\ell$. 

To understand the rest of the coamoeba, note that when $M\geq 2$ the map $\Arg\circ\Phi$ is
injective on $\P^1\smallsetminus\R\P^1$ (see~\cite[\S~2]{NS}).
(When $M=1$, $\ell$ is parallel to a sum of coordinate directions and $\calA(\ell)$ is a
translate of the corresponding one-dimensional subtorus of $\T^N$.)
It suffices to consider the image of the upper half plane, as the image of the lower half
plane is obtained by multiplying by $-1$ (induced by complex conjugation).
For the upper half plane, consider $\Arg\circ\Phi(z)$ for $z$ lying on a contour $C$ 
as shown in Figure~\ref{F:contour} 
\begin{figure}[htb]
  \begin{picture}(223,77)
    \put(0,10){\includegraphics{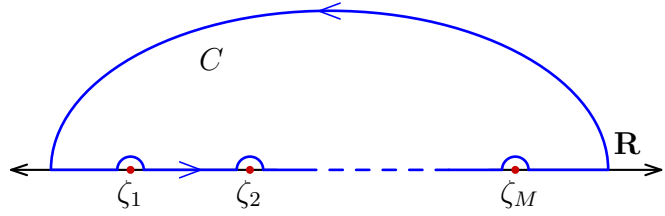}}
    \put(40,0){$\zeta_1$} \put(85,0){$\zeta_2$} \put(185,0){$\zeta_M$}
    \put(228,18){$\R$} \put(71,50){$C$}
  \end{picture}
 \caption{Contour in upper half plane}
 \label{F:contour}
\end{figure}
that contains semicircles of
radius $\epsilon$ centered at each root $\zeta_j$ and a semicircle of radius $1/\epsilon$
centered at 0, but otherwise lies along the real axis, for $\epsilon$ a sufficiently small
positive number.  

As $z$ moves along $C$, $\Arg\circ\Phi(z)$ takes on values $p_1,\dotsc,p_{M+1}$, for 
$z\in C\cap\R$.
On the semicircular arc around $\zeta_j$, it traces a curve from $p_{j}$ to $p_{j+1}$ in
which nearly every component is constant, except for those $i$ where $b_i(\zeta_j)=0$,
each of which decreases by $\pi$. 
In the limit as $\epsilon\to 0$, this becomes the line segment between $p_{j}$ and $p_{j+1}$
with direction $-\bbf_j$, where
\[
   \defcolor{\bbf_j}\ :=\ \sum_{i\colon b_i(\xi_j)=0} \be_i
   \ =\ \sum_{i=m_j}^{m_{j+1}-1} \be_i\,,
\]
and where we set $\be_{N+1}:=-(\be_1+\dotsb+\be_N)$.
This is because we are really working in the torus for $\P^N$, which is the quotient
$\T^{N+1}/\Delta(\T)$ of $\T^{N+1}$ modulo the diagonal torus, and
$\be_i\in\T^{N+1}/\Delta(\T)$ is the  image of the standard basis element in $\T^{N+1}$.
Thus $\be_1+\dotsb+\be_{N+1}=0$.

Along the arc near infinity, $\Arg\circ\Phi(z)$ approaches the line segment between $p_{M+1}$ and
$p_1$ which has direction $-\bbf_{M+1}$, where 
 \begin{equation}\label{Eq:bbf_M+1}
   \bbf_{M{+}1}\ =\ - \sum_{i\colon b_i(\infty)\neq0} \be_j\ =\ -(\bbf_1+\dotsb+\bbf_M)\,.
 \end{equation}
This polygonal path connecting $p_1,\dotsc,p_{M+1}$ in cyclic order forms the
boundary of the image of the upper half plane under $\Arg\circ\Phi$, which is a
two-dimensional membrane in $\T^N$.

The boundary of the image of the lower half plane is also a piecewise linear path
connecting $p_1,\dotsc,p_{M+1}$ in cyclic order, but the edge directions are
$\bbf_1,\dotsc,\bbf_{M{+}1}$.

\begin{example}\label{Ex:tc}
 Let $N=3$ and suppose that the affine functions $b_i$ are $z$, $1{-}2z$, $z{-}2$, 
 and $1$.
 Then $M=N$, $\xi_i=\zeta_i$, $\zeta_1=0$, $\zeta_1=1/2$, $\zeta_2=2$,
 and $\bbf_i=\be_i$. 
 The vertices of $\calA(\ell)$ are 
 \[
  p_1\ =\ (\pi,0,\pi)\,,\quad
  p_2\ =\ (0,0,\pi)\,,\quad 
  p_3\ =\ (0,-\pi,\pi)\,,\quad \mbox{and}\quad 
  p_4\ =\ (0,-\pi,0)\,.
 \]
 Figure~\ref{F:linecoamoeba} shows two views of $\calA(\ell)$ in the fundamental domain
 $[-\pi,\pi]^3\subset\R^3$ of $\T^3$, where the opposite faces of the cube are identified
 to form $\T^3$. 
 \hfill$\QED$
\end{example}
\begin{figure}[htb]
 \begin{picture}(133,115)
    \put(0,0){\includegraphics[height=115pt]{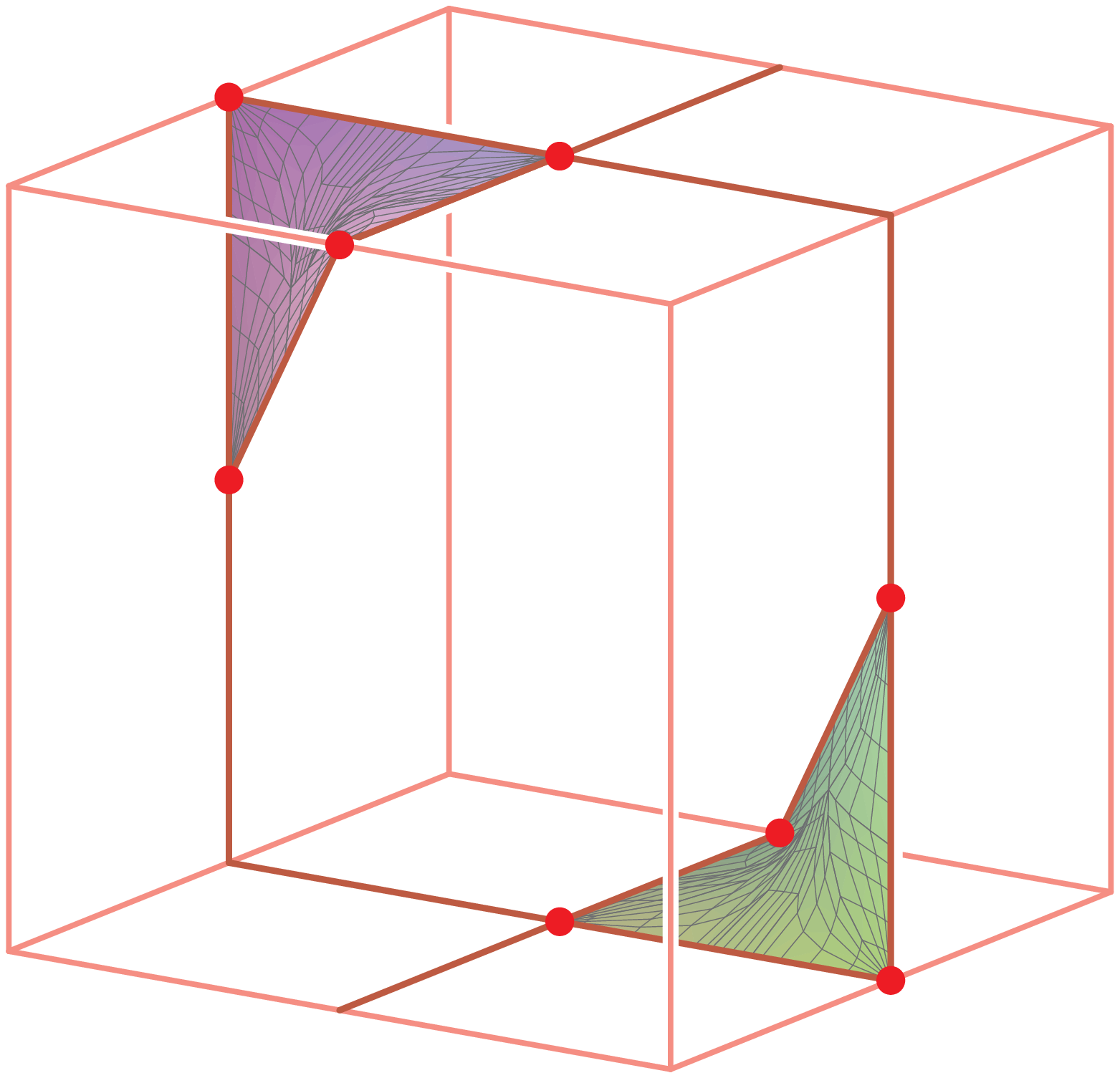}}
    \put(35, 78){$p_1$}  \put(53,104){$p_2$}
    \put(13,110){$p_3$}  \put(10,62){$p_4$}

    \put(75,33){$p_1$}   \put( 55, 7){$p_2$}
    \put(97, 2){$p_3$}   \put( 97,50){$p_4$}
  \end{picture}
  \qquad\qquad
  \begin{picture}(142,115)(-23,0)
    \put(0,0){\includegraphics[height=115pt]{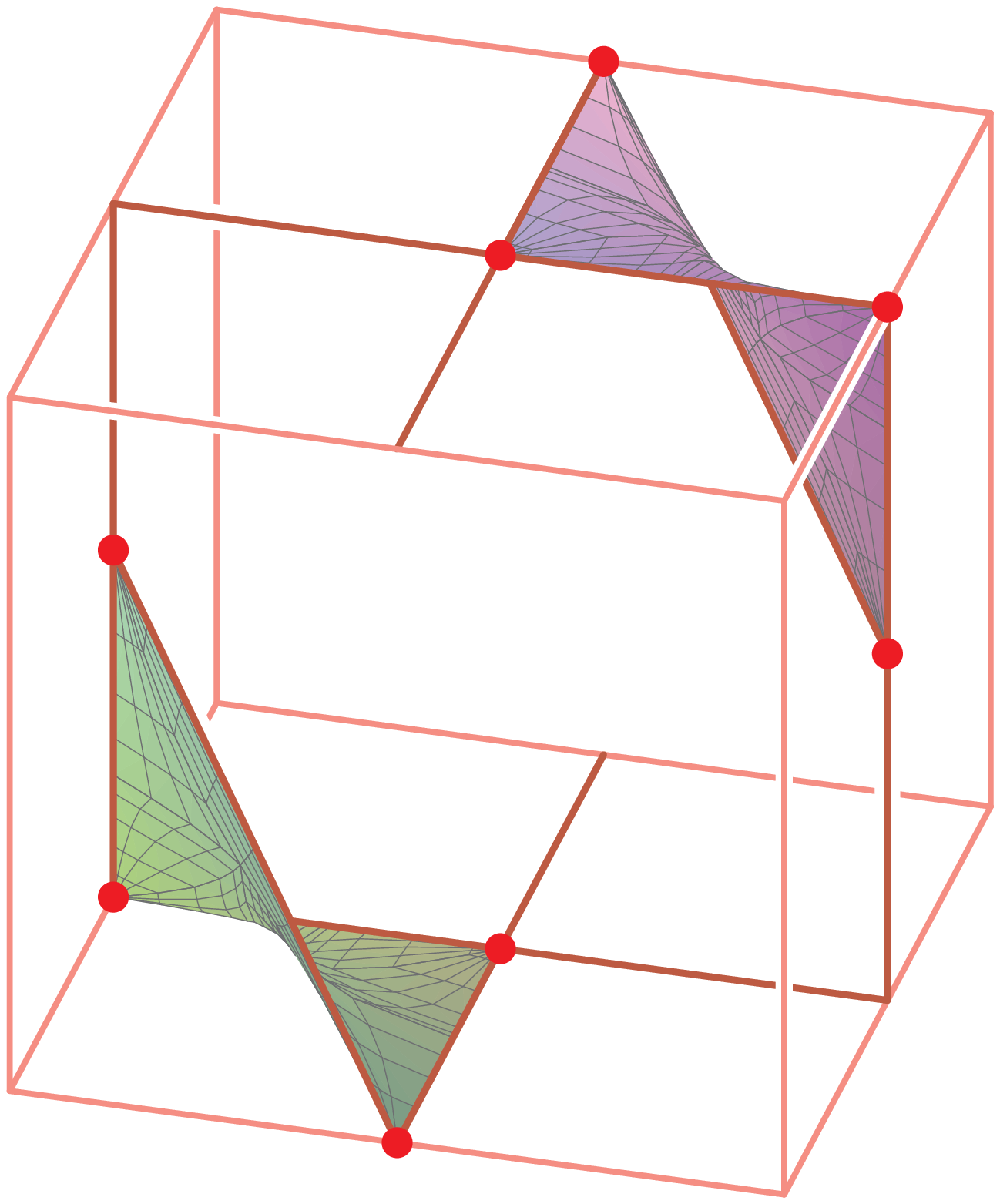}}
   \put( 43,104){$p_1$}  \put( 32, 84){$p_2$}
   \put(110, 83){$p_3$}  \put(108,86){\vector(-1,0){20}}
   \put(110, 50){$p_4$}  \put(108,53){\vector(-1,0){20}}

   \put( 30, -2){$p_1$}  \put( 36, 31){$p_2$}
   \put(-23, 28){$p_3$}  \put(-12,30){\vector(1,0){20}}
   \put(-23, 61){$p_4$}  \put(-12,63){\vector(1,0){20}}
  \end{picture}

 \caption{Two views of $\calA(\ell)$}
 \label{F:linecoamoeba} 
\end{figure}

\begin{example}\label{Ex:CoArepeat}
 We consider three examples when $N=3$ in which the affine functions have repeated zeroes.
 For the first, suppose that the affine functions $b_i$ are 
 $-1{-}z,-1{-}z,2z$, and $2$.
 These have zeroes $-1\leq-1<0<\infty$ and the vertices of the coamoeba $\calA(\ell)$ are 
\[
   (0,0,\pi)\,,\quad (-\pi,-\pi,\pi)\,,\quad \mbox{and}\quad
   (-\pi,-\pi,0)\,.
\]
 So $\calA(\ell)$ consists of two triangles with edges parallel to $\be_1{+}\be_2$,
 $\be_3$, and $\be_1{+}\be_2{+}\be_3$.
 It lies in the plane $\theta_1=\theta_2$.

 For a second example, suppose that the affine functions $b_i$ are 
 $\frac{1}{2}+z,\frac{1}{2}-z,-2$, and $1$.
 These have zeroes $-1,1,\infty$, and $\infty$.
 The vertices of the coamoeba $\calA(\ell)$ are 
\[
   (\pi,0,\pi)\,,\quad (0,0,\pi)\,,\quad \mbox{and}\quad
   (0,-\pi,\pi)\,.
\]
 So $\calA(\ell)$ consists of two triangles with edges parallel to $\be_1$,
 $\be_2$, and $\be_1{+}\be_2$.
 It lies in the plane $\theta_3=\pi$.

 Finally, suppose that the affine functions $b_i$ are 
 $-z,1-z,2z-2$, and $1$.
 These have zeroes $0,1,1$, and $\infty$.
 The vertices of the coamoeba $\calA(\ell)$ are 
\[
   (0,0,\pi)\,,\quad (-\pi,0,\pi)\,,\quad \mbox{and}\quad
   (-\pi,-\pi,0)\,.
\]
 So $\calA(\ell)$ consists of two triangles with edges parallel to $\be_1$,
 $\be_2{+}\be_3$, and $\be_1{+}\be_2{+}\be_3$.
 It lies in the plane $\theta_3=\theta_2+\pi$.
 We display all three coamoebas in Figure~\ref{F:repeat}.
 \hfill$\QED$
\end{example}

The \demph{coamoeba chain $\overline{\calA(\ell)}$} of $\ell$ is the closure of the
coamoeba of $\ell$ in which the image of each half plane (under  $\Arg\circ\Phi(\cdot)$) 
is oriented so that its boundary
is an oriented polygonal path connecting $p_1,\dotsc,p_{M+1},p_1$.
On the upper half plane this agrees with the orientation induced by the parametrization
$\P^1\smallsetminus\R\P^1\to\calA(\ell)$, but it has the opposite orientation on the lower
half plane. 
The boundary of $\overline{\calA(\ell)}$ consists of $M{+}1$ circles in which $p_j$ and
$p_{j+1}$ are antipodal points on the $j$th circle and both semicircles (each is the
boundary of the image of a half plane) are oriented to point from $p_j$ to $p_{j+1}$.
This coamoeba chain is not a closed chain, as it has nonempty oriented boundary, but there
is a natural zonotope chain $Z(\ell)$ such that $\overline{\calA(\ell)}+Z(\ell)$ is closed.

Intuitively, $Z(\ell)$ is the cone over the boundary of $\overline{\calA(\ell)}$ with vertex
the origin $\defcolor{\bO}:=(0,\dotsc,0)$. 
Unfortunately, there is no notion of a cone in $\T^N$ and the zonotope chain may be more than
just this cone.
We instead define a chain in $\R^N$ as the cone over an oriented
polygon $P(\ell)$ with vertex the origin and set $Z(\ell)$ to be the image of this chain in
$\T^N$. 

\begin{definition}\label{Def:Zonotope_chain}
 Recall that the affine functions $b_1,\dotsc,b_N,b_{N{+}1}=1$ are ordered in the
 following way.
 Their zeroes are $\zeta_1<\dotsb<\zeta_M<\zeta_{M{+}1}=\infty$ and there are 
 integers $1=m_1<\dotsb<m_{M+1}\leq N{+}1$ and $n_1,\dotsc,n_{M{+}1}$ with 
 $m_j< n_j\leq m_{j+1}$ such that one
 of~\eqref{Eq:inc} or~\eqref{Eq:dec} holds, where $\sgn_i$ is the sign of $b_i$ on
 $(-\infty,\zeta_1)$. 

 We had defined $\bbf_j:=\sum_{i=m_j}^{m_{j+1}-1}\be_i$.
 We will need the following vectors
\[
   \defcolor{\bg_j}\ :=\ \sum_{i=m_j}^{n_j-1}\be_i
     \qquad\mbox{and}\qquad
   \defcolor{\bh_j}\ :=\ \sum_{i=m_j}^{m_{j+1}-1}\sgn_i \be_i\ =\ 
     \sgn_{m_j}(2\bg_j-\bbf_j)\ \,.
\]

 We first define a sequence of points
 $\tp_1,\tpp_1,\dotsc,\tp_{2M+2},\tpp_{2M+2}\in(\pi\Z)^N$ with the property that
 $\tp_i,\tpp_i,\tp_{M{+}1{+}i},$ and $\tpp_{M{+}1{+}i}$ all map to $p_i\in\T^N$.
 To begin, set $\tp_1$ to be the unique point in $\{0,\pi\}^N\subset\R^N$ which maps to
 $p_1\in\T^N$,
 \begin{equation}\label{Eq:ptilde}
  \tp_{1,i}\ =\ \arg(\sgn_i)\ =\ \left\{
     \begin{array}{rcl}\pi&\ &\mbox{if }\sgn_i=-1\\
                         0&&\mbox{if }\sgn_i=1\end{array}\right.\ .
 \end{equation}

 For each $j=1,\dotsc,M{+}1$, set $\defcolor{\tp_{j+1}}:=\tp_j+\pi\bh_j$.
 Since $\bh_j=\sgn_{m_j}(2\bg_j-\bbf_j)$, we have that $\tp_{j+1}$ maps to $p_{j+1}$, as
 $p_{j+1}=p_j-\pi\bbf_j\mod (2\pi\Z)^N$.
 For the remainder of the points, if $n_j<m_{j+1}$, so that both signs occur, set 
 $\defcolor{\tpp_j} := \tp_j+2\pi\sgn_{m_j}\bg_j$, and otherwise set 
 $\defcolor{\tpp_j} := \tp_j$.
 Observe that $\tpp_j$ maps to $p_j$ and that
 in every case, $\tp_{j+1}=\tpp_j-\pi\sgn_{m_j}\bbf_j$.

 We claim that $\tp_{M{+}2}=-\tp_1$.
 Since $\tp_{M{+}2}=\tp_1+\pi(\bh_1+\dotsb+\bh_{M{+}1})$, we need to show that 
 $\pi(\bh_1+\dotsb+\bh_{M{+}1})=-2\tp_1$.
 By definition,
\[
  \bh_1+\dotsb+\bh_{M+1}\ =\ \sum_{i=1}^{N+1} \sgn_i\be_i\,.
\]
 We have $\sgn_{N+1}=1$ as $b_{N{+}1}=1$.
 Since we defined $\be_{N+1}$ to be $-(\be_1+\dotsb+\be_N)$, we see that
\[
  \bh_1+\dotsb+\bh_{M+1}\ =\ \sum_{i=1}^N (\sgn_i-1)\be_i\,.
\]
 The $i$th component of this sum is $-2$ if $\sgn_i=-1$ and $0$ if
 $\sgn_i=1$.
 Since $\tp_{1,i}=\arg(\sgn_i)$, this proves the claim.

 Finally, for each $M{+}2\leq j\leq 2M{+}2$, set
\[
    \defcolor{\tp_j}\ :=\ -\tp_{j-(M{+}1)}\qquad\mbox{and}\qquad
    \defcolor{\tpp_j}\ :=\ -\tp_{j-(M{+}1)}\,,
\]
 and let $P(\ell)$ be the cyclically oriented path obtained by connecting
\[
   \tpp_{2M+2}\,,\,\tp_{2M+2}\,,\,
   \tpp_{2M+1}\,,\,\tp_{2M+1}\,,\, \dotsc,
   \tpp_2\,,\,\tp_2\,,\,\tpp_1\,,\, \tp_1
\]
 in cyclic order.
 The cone over $P(\ell)$ with vertex the origin is the union of possibly degenerate triangles of
 the form 
\[
  \conv(\bO,\tp_{i+1},\tpp_{i})
  \qquad\mbox{and}\qquad  
  \conv(\bO,\tpp_{i},\tp_{i}) 
   \qquad\mbox{for}\qquad i=2M{+}2,\dotsc,2,1\,,
\]
where $\tp_{2M+3}:=\tp_1$. 
Each triangle is oriented so its three vertices  occur in positive order along its boundary.
If a point $\tp_i$ or $\tpp_i$ is $\bO$, then the triangles involving it degenerate into line
segments, as do triangles $\conv(\bO,\tpp_{i},\tp_{i})$ when $\tpp_{i}=\tp_{i}$.
Let $\widetilde{Z(\ell)}$ be the union of these oriented triangles, which is a chain in
$\R^N$. 
Define the \demph{zonotope chain $Z(\ell)$} to be the image in $\T^N$ of
$\widetilde{Z(\ell)}$. 
\hfill$\QED$
\end{definition} 

\begin{example}\label{Ex:CoZrepeat}
Figure~\ref{F:ZC} shows two views of the zonotope chain with the coamoeba chain of
Figure~\ref{F:linecoamoeba}. 
\begin{figure}[htb]
  \begin{picture}(153,150)
   \put(0,0){\includegraphics[height=150pt]{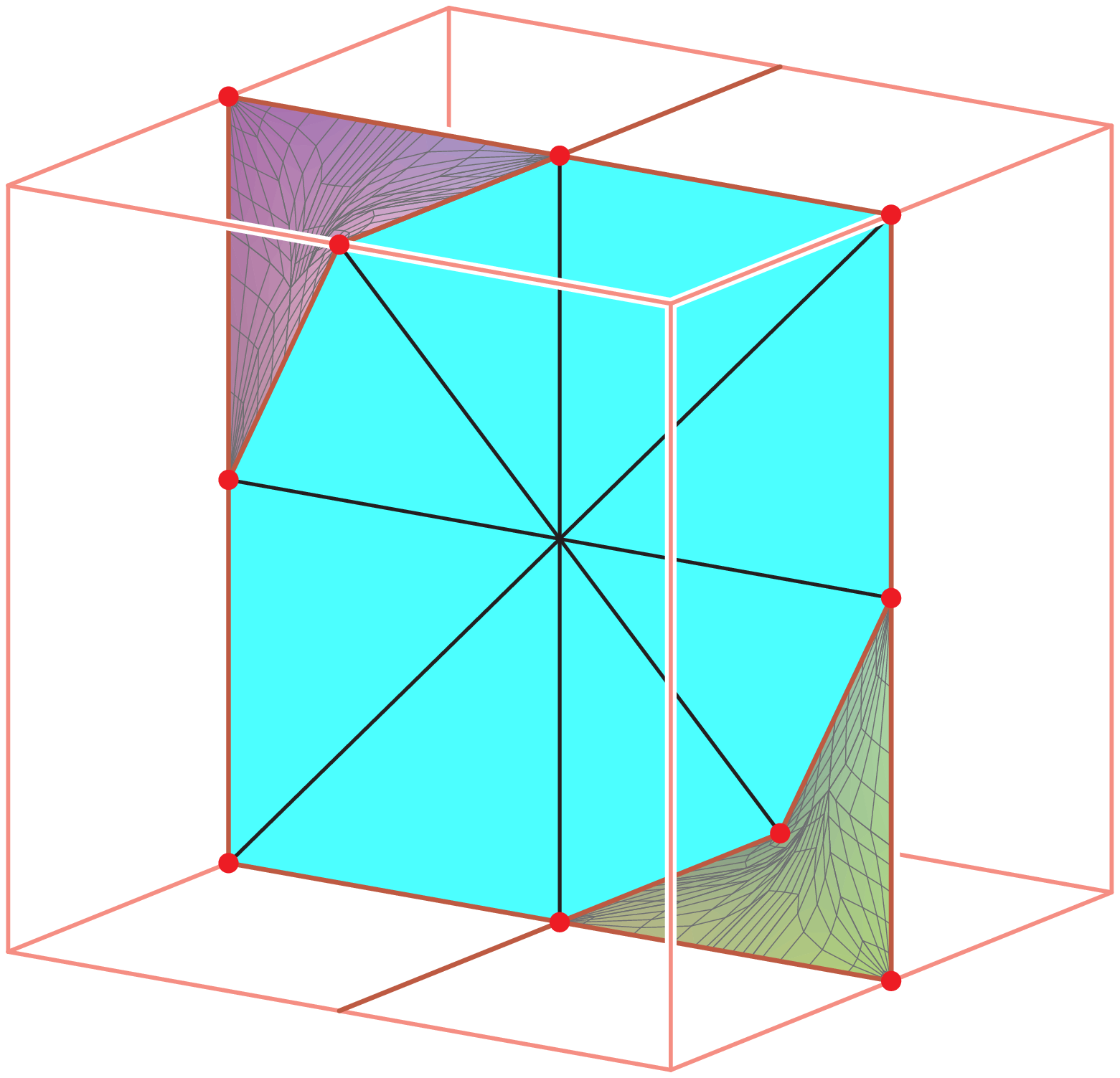}}
   \put( 44,101){$\tp_1$}  \put(69,134){$\tp_2$}
   \put(120,126){$\tp_3$}  \put(127,64){$\tp_4$}
   \put(101, 46){$\tp_5$}  \put(73,10){$\tp_6$}
   \put( 18, 32){$\tp_7$}  \put(18,82){$\tp_8$}
   \put(70.5,60){$\bO$}
  \end{picture}
   \qquad\qquad
  \begin{picture}(170,150)(-20,0)
   \put(0,0){\includegraphics[height=150pt]{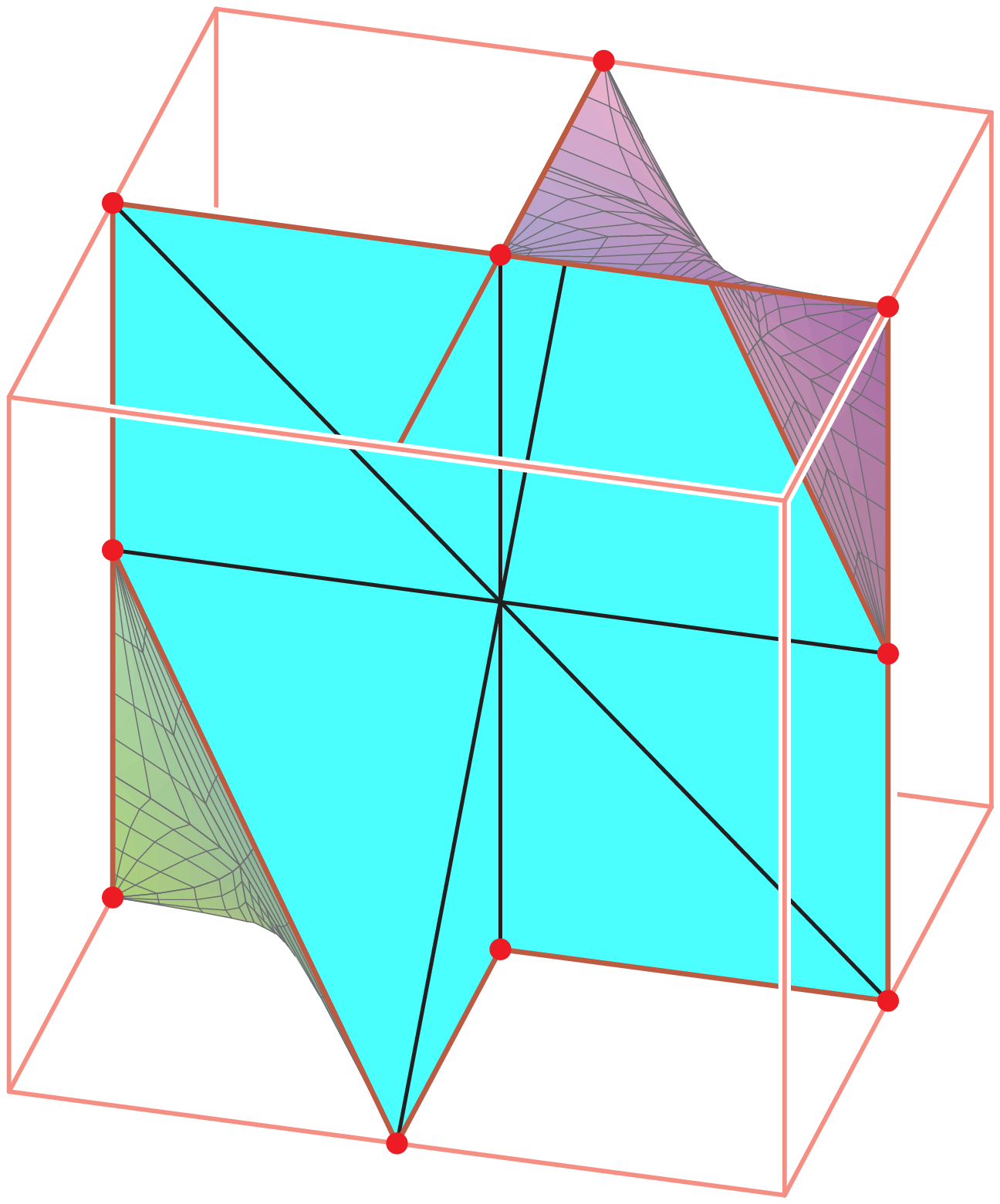}}
   \put( 59,135){$\tp_1$}  \put( 46,110){$\tp_2$}
   \put(  0,126){$\tp_3$} 
   \put(-20, 78){$\tp_4$} \put(-8,82){\vector(1,0){18}}
   \put( 55,  9){$\tp_5$}  \put( 65, 35){$\tp_6$}
   \put(113, 21){$\tp_7$} 
   \put(133, 66){$\tp_8$} \put(131,69){\vector(-1,0){18}}
   \put(53,66){$\bO$}
  \end{picture}
 \caption{Two views of the coamoeba and zonotope chains}
 \label{F:ZC}
\end{figure}
 Now consider the zonotope chains for the three lines of Example~\ref{Ex:CoArepeat}.
 When $\ell$ is defined by $z\mapsto[-1-z,-1-z,2z,2]$, the points $\tp_1,\dotsc,\tpp_6$
 (omitting repeated points) are
\[
  (0,0, \pi)\,,\ ( \pi, \pi, \pi)\,,\ ( \pi, \pi,0)\,,\ 
  (0,0,-\pi)\,,\ (-\pi,-\pi,-\pi)\,,\quad\mbox{ and }\quad (-\pi,-\pi,0)\,.
\]
We display the coamoeba chain and the zonotope chain of $\ell$ at the left of
Figure~\ref{F:repeat}. 
\begin{figure}[htb]

 \begin{picture}(102,150)(0,-20)
  \put(0,0){\includegraphics[height=110pt]{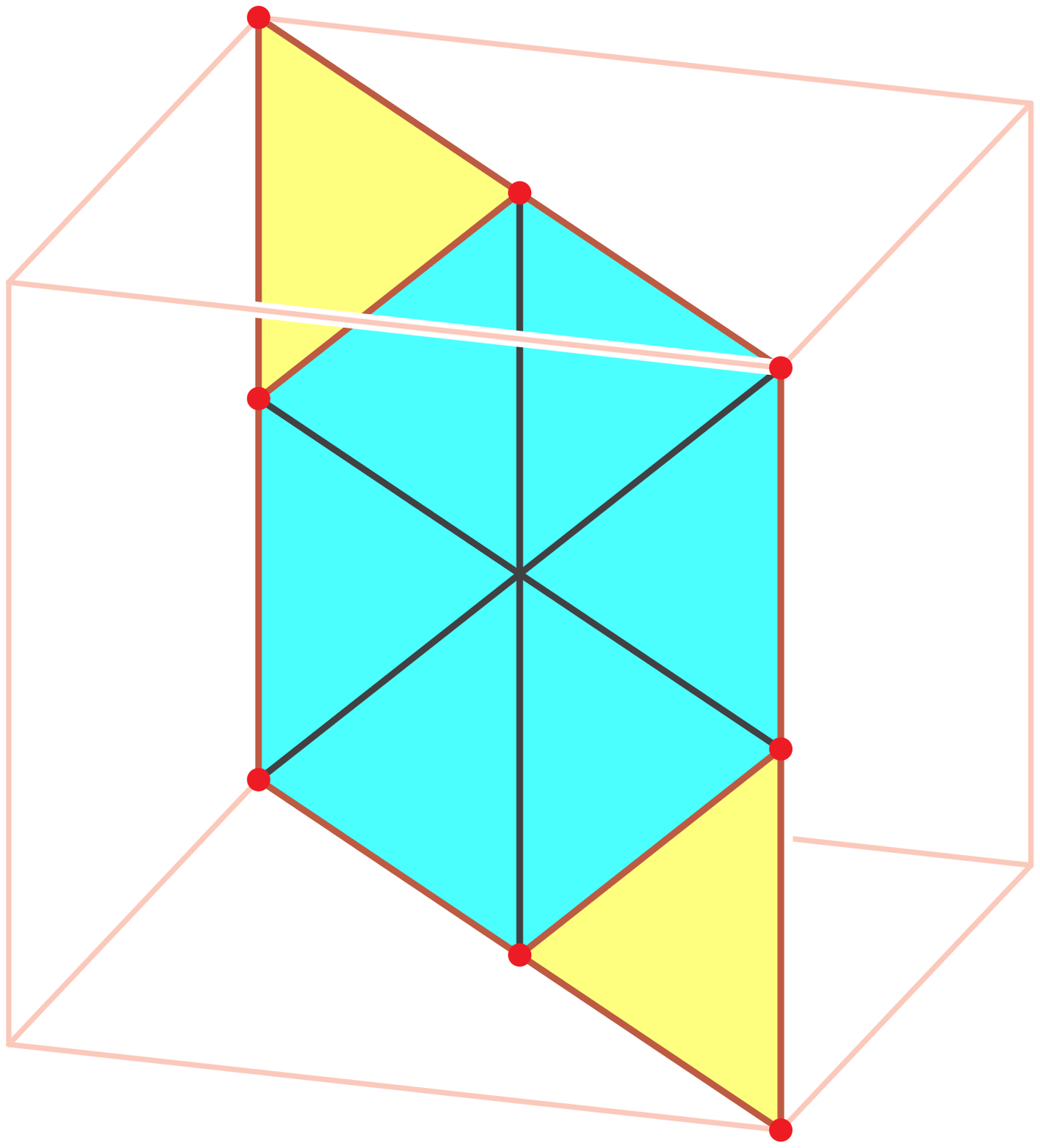}}
  \put( 51, 95){$\tp_1$}  \put(79, 72){$\tp_2$}
  \put( 79, 35){$\tp_3$}  
  \put( 39, 10){$\tp_4$}  \put( 11, 35){$\tp_5$}
  \put( 11, 70){$\tp_6$}  
  \thicklines
   \put( 12,118){\White{\vector(2,-3){21}}}
   \put( 90, -8){\White{\vector(-2,3){21}}}
  \thinlines
  \put(0 ,121){$\calA(\ell)$}\put( 12,118){\vector(2,-3){20}}
  \put(80,-18){$\calA(\ell)$}\put( 90, -8){\vector(-2,3){20}}
 \end{picture}
  \qquad
 \begin{picture}(116,150)(-5,-20)
  \put(0,0){\includegraphics[height=110pt]{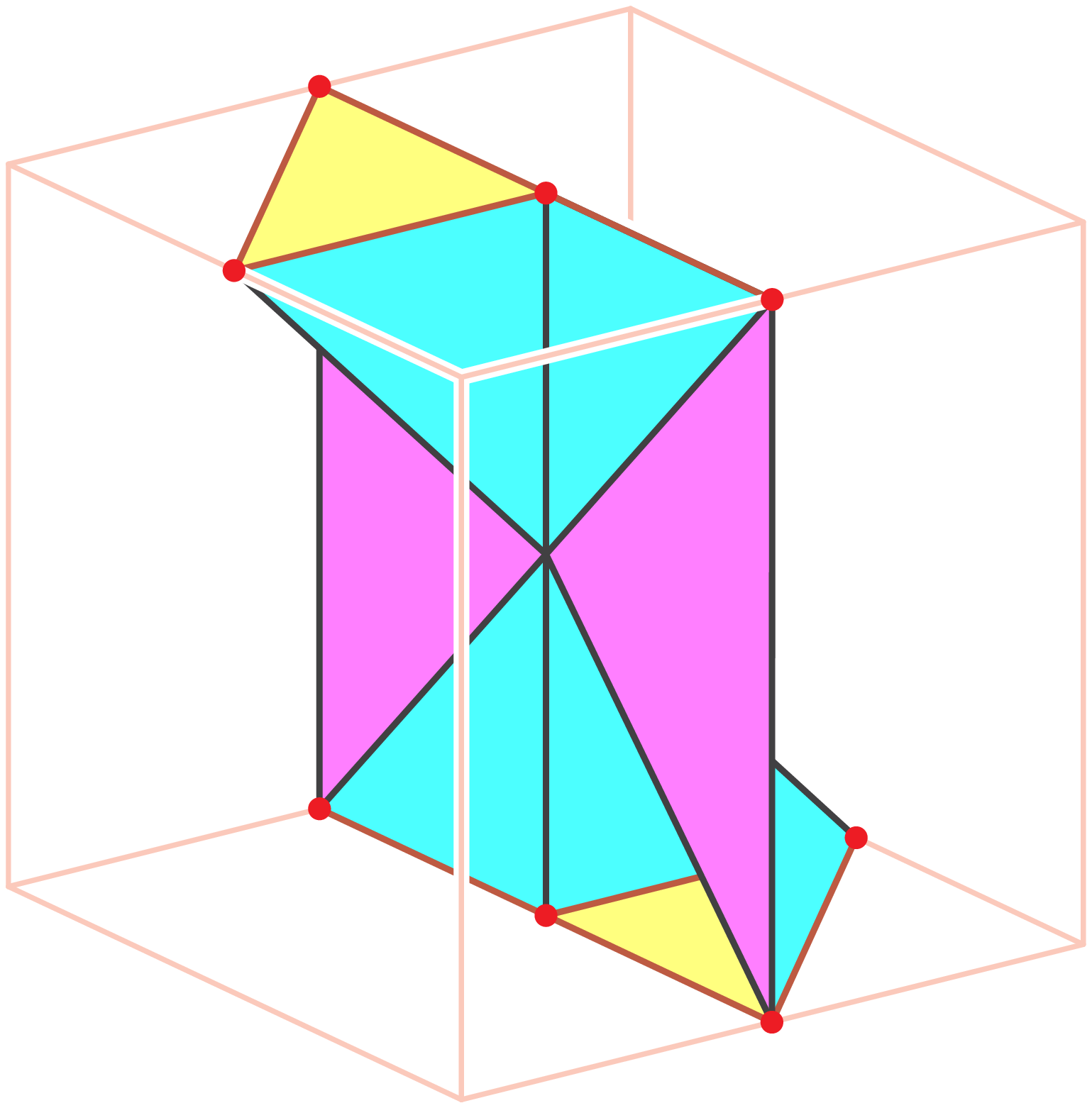}}
  \put(12, 75){$\tp_1$}   \put( 50,97){$\tp_2$}
  \put(79, 71){$\tp_3$}   \put( 72,-5){$\tpp_3$}
  \put(88, 30){$\tp_4$}   
  \put( 6,  4){$\tp_5$}  
  \put( 25,18){$\tp_6$}  \put( 25,107){$\tpp_6$}
  \put(-5,122){$\calA(\ell)$}
  \put( 28, -19){$\calA(\ell)$}
  \thicklines
   \put(16, 9){\White{\vector(4,1){36}}}
   \put(8,118){\White{\vector(1,-1){28}}}
   \put(42,-9){\White{\vector(1, 1){28}}}
  \thinlines
   \put(16,9){\vector(4,1){35}}
   \put(8,118){\vector(1,-1){27}}
   \put(42,-9){\vector(1, 1){27}}
 \end{picture}
  \qquad
 \begin{picture}(181,150)(-5,-20)
  \put(0,0){\includegraphics[height=110pt]{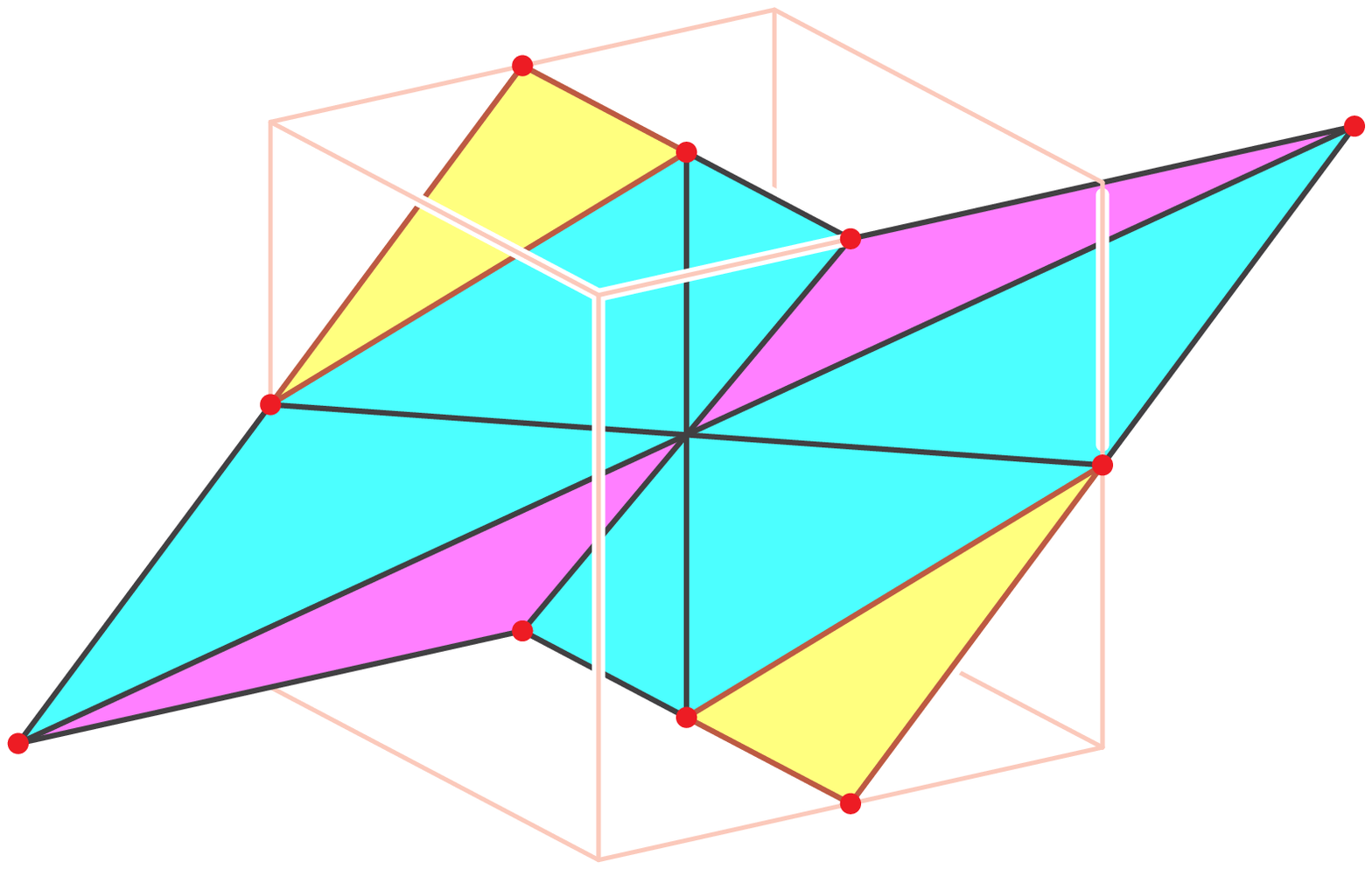}}
  \put( 83,  97){$\tp_1$}   \put(104, 89){$\tp_2$}
  \put(166, 101){$\tpp_2$}  \put(144, 47){$\tp_3$}
  \put( 81,   7){$\tp_4$}   
  \put( 60, 19){$\tp_5$} 
  \put( -5,  3){$\tpp_5$}   \put( 20, 64){$\tp_6$}
  \put(  0,120){$\calA(\ell)$}
  \put(145, -10){$\calA(\ell)$}
  \thicklines
   \put( 18,114){\White{\vector(2,-1){49}}}
   \put(153,  0){\White{\vector(-2,1){49}}}
  \thinlines
   \put( 18,114){\vector(2,-1){48}}
   \put(153,  0){\vector(-2,1){48}}
 \end{picture}

\caption{Coamoeba and zonotope chains}
\label{F:repeat}
\end{figure}

When $\ell$ is defined by $z\mapsto [\frac{1}{2}+z,\frac{1}{2}-z,-2,1]$, the points
$\tp_1,\dotsc,\tpp_6$ are  
 \begin{eqnarray*}
  &\tp_1\ =\ (\pi,0,\pi)\,,\ 
  \tp_2\ =\ (0,0,\pi)\,,\ 
  \tp_3\ =\ (0,\pi,\pi)\,,\ 
  \Magenta{\tpp_3\ =\ (0,\pi,-\pi)}\,,&\\ 
  &\tp_4\ =\ (-\pi,0,-\pi)\,,\ 
  \tp_5\ =\ (0,0,-\pi)\,,\ 
  \tp_6\ =\ (0,-\pi,-\pi)\,,\ 
  \Magenta{\tp_6'\ =\ (0,-\pi,\pi)}\,.&
 \end{eqnarray*}
We display the coamoeba and zonotope chains of $\ell$ in the middle of Figure~\ref{F:repeat}.

When $\ell$ is defined by $z\mapsto [-z,1-z,2z-2,1]$, the
points $\tp_1,\dotsc,\tpp_6$  are 
 \begin{eqnarray*}
  &\tp_1\ =\ (0,0,\pi)\,,\ 
  \tp_2\ =\ (\pi,0,\pi)\,,\ 
  \Magenta{\tpp_2\ =\ (\pi,2\pi,\pi)}\,,\ 
  \tp_3\ =\ (\pi,\pi,0)\,,&\\ 
  &\tp_4\ =\ (0,0,-\pi)\,,\ 
  \tp_5\ =\ (-\pi,0,-\pi)\,,\ 
  \Magenta{\tpp_5\ =\ (-\pi,-2\pi,-\pi)}\,,\ 
  \tp_6\ =\ (-\pi,-\pi,0)\,.&
 \end{eqnarray*}
We display the coamoeba and zonotope chains of $\ell$ on the right of Figure~\ref{F:repeat}.
\hfill$\QED$
\end{example}

We state the main result of this section.

\begin{theorem}\label{Th:homology}
  The sum, $\overline{\calA(\ell)}+Z(\ell)$, of the coamoeba chain and the zonotope chain forms a
  cycle in $\T^N$ whose homology class is
\[
   [\overline{\calA(\ell)}+Z(\ell)]\ =\ 
    \sum_{\substack{1\leq i<j\leq N\\(\tp_{1,i},\tp_{1,j})=(0,\pi)}}
    \be_i\wedge\be_j\,.
\]
\end{theorem}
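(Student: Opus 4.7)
The plan is to first verify that $\overline{\calA(\ell)}+Z(\ell)$ is a $2$-cycle in $\T^N$, and then to compute the coefficient $c_{ij}$ of $\be_i\wedge\be_j$ in its homology class by projecting to the coordinate $2$-torus $\T_{i,j}$ and reducing to the base case $N=2$.

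For the cycle property, I would cancel $\partial\overline{\calA(\ell)}$ against $\partial Z(\ell)$ edge by edge. The coamoeba boundary consists of $M{+}1$ circles through antipodal points $p_j$ and $p_{j+1}$ in direction $\bbf_j$, with both semicircles (from upper and lower half-planes) oriented $p_j\to p_{j+1}$. The zonotope boundary is the image in $\T^N$ of the closed polygon $P(\ell)\subset\R^N$. Its segment edges $\tp_k\to\tpp_{k-1}$ project to semicircles going from $p_k$ to $p_{k-1}$, reversed from the coamoeba semicircles, and so cancel in pairs. Its loop edges $\tpp_j\to\tp_j$ (which occur only when the sign changes within $[m_j,m_{j+1})$) project to full loops in direction $-\sgn_{m_j}\bg_j$ based at $p_j\in\T^N$; these are cancelled by the loops from the second half of $P(\ell)$, which, since $-p_j=p_j$ in $\T^N$ (as $2p_j=0$), are based at the same point $p_j$ but traverse in the opposite direction $+\sgn_{m_j}\bg_j$.

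Next, for each pair $1\leq i<j\leq N$ I would compute $c_{ij}$ via the coordinate projection $\pi_{ij}\colon\T^N\to\T_{i,j}\cong\T^2$. Because $c_{ij}=\frac{1}{4\pi^2}\int_{\overline{\calA(\ell)}+Z(\ell)}d\theta_i\wedge d\theta_j$, the coefficient equals the class of the pushforward of $\overline{\calA(\ell)}+Z(\ell)$ in $H_2(\T^2)\cong\Z$. The projection property of coamoebas recalled in Section~\ref{S:one} gives $\pi_{ij}(\overline{\calA(\ell)})=\overline{\calA(\ell_{ij})}$, where $\ell_{ij}\subset\P^2$ is the image of $\ell$ under the coordinate projection $[x_1{:}\cdots{:}x_{N+1}]\mapsto[x_i{:}x_j{:}x_{N+1}]$. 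Since the cone-over-polygon construction commutes with linear projection, and since the convention for $\ell_{ij}$ uses the same ordering of $b_i,b_j$ as $\ell$ (because $i<j$ implies $\xi_i\leq\xi_j$), the pushforward $\pi_{ij}(Z(\ell))$ equals $Z(\ell_{ij})$, reducing the computation to the $N=2$ case.

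For the $N=2$ base case, a direct calculation of signed areas suffices. In each sign pattern $(\sgn_i,\sgn_j)$ the coamoeba contributes $+\pi^2$ to the signed area, since the paper's convention makes both half-plane images (triangles of area $\pi^2/2$) positively oriented. The zonotope is the image in $\T^2$ of the cone over a hexagonal polygon $P(\ell)\subset\R^2$ with vertices $\tp_1,\ldots,\tp_6$; by the shoelace formula, this cone has signed area $+3\pi^2$ when $(\sgn_i,\sgn_j)=(+,-)$ (where $P(\ell)$ is a convex hexagon wrapping counterclockwise around $\bO$) and $-\pi^2$ in each of the other three sign patterns (where $P(\ell)$ is a self-intersecting bow-tie through $\bO$). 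The totals $4\pi^2$ and $0$ give classes $1$ and $0$ respectively, exactly as required. The main obstacle is the orientation bookkeeping: the natural orientations from $\Arg\circ\Phi$ on the two half-planes disagree (positive on upper, negative on lower) and the paper's convention flips the lower one; combining this with the cross-product signs of the cone triangles of $Z(\ell)$ and with the bow-tie self-intersections is essential to recover the correct integer class in every sign pattern, and degenerate configurations (coincident zeros, projected line parallel to a coordinate direction) force the projected cycle into a proper subtorus giving $c_{ij}=0$, consistent with $\sgn_i=\sgn_j$ in those cases.
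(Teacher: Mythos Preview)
Your overall strategy---boundary cancellation, then projection to each coordinate $2$-torus to reduce to $N=2$---is exactly the paper's approach, and your cycle argument and your $M=2$ shoelace computation for the four sign patterns are correct.

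The gap is in your handling of the degenerate $N=2$ base cases. You assert that when the projected line $\ell_{ij}$ has coincident zeros, or is parallel to a coordinate direction, the pushforward lies in a proper subtorus so $c_{ij}=0$, and that this is ``consistent with $\sgn_i=\sgn_j$ in those cases.'' Both claims are false. The ordering conventions~\eqref{Eq:inc}--\eqref{Eq:dec} allow either sign order within a block of coincident zeros, so one can have $\xi_i=\xi_j$ with $i<j$ and $(\sgn_i,\sgn_j)=(+,-)$; likewise $b_j$ may be a negative constant (zero at $\infty$) with $\sgn_j=-1$ while $\sgn_i=+1$. In precisely these $(0,\pi)$ degenerate situations the projected \emph{coamoeba} is indeed one-dimensional, but the projected \emph{zonotope chain} is genuinely two-dimensional and carries the whole class: the paper's Example~\ref{Ex:plane} works out the $M=1$ lines $[-z:z:1]$ and $[-z:-1:1]$ explicitly, each with $\tp_1=(0,\pi)$ and $[\overline{\calA(\ell)}+Z(\ell)]=\be_1\wedge\be_2$. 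So the contribution is $1$, not $0$, and your base-case analysis, which only treats $M=2$, does not cover them.

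To complete the argument you must run through the eight $M=1$ configurations of Example~\ref{Ex:plane} (where the hexagon $P(\ell)$ is no longer what you describe, since $\tpp_j\neq\tp_j$ for some $j$), and separately dispose of the case where both $b_i$ and $b_j$ vanish at $\infty$; there the paper notes that the sign convention on the last block forces $(\tp_{1,i},\tp_{1,j})\neq(0,\pi)$, so $c_{ij}=0$ is correct for that one case.
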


\begin{example}\label{Ex:homology_class}
For the line of Example~\ref{Ex:tc}, $\tp_1=(\pi,0,\pi)$, and the only entries $i<j$ with 0 at
$i$ and $\pi$ at $j$ are $i=2$ and $j=3$, and so
\[
    [\overline{\calA(\ell)}+Z(\ell)]\ =\ \be_2\wedge\be_3\,.
\]
For the first line of Example~\ref{Ex:CoArepeat}, $\tp_1=(0,0,\pi)$, and so 
\[
    [\overline{\calA(\ell)}+Z(\ell)]\ =\ \be_1\wedge\be_3 + \be_2\wedge\be_3\,.
\]
For the second line of Example~\ref{Ex:CoArepeat}, $\tp_1=(\pi,0,\pi)$, so 
that $[\overline{\calA(\ell)}+Z(\ell)]=\be_2\wedge\be_3$.
For the third line of Example~\ref{Ex:CoArepeat}, $\tp_1=(0,0,\pi)$, and 
$[\overline{\calA(\ell)}+Z(\ell)]=\be_1\wedge\be_3 + \be_2\wedge\be_3$.
These homology classes are apparent from Figures~\ref{F:ZC} and~\ref{F:repeat}.
 \hfill$\QED$
\end{example}
  
\begin{example}\label{Ex:plane}
 Our proof of Theorem~\ref{Th:homology} rests on the case of $N=2$.
 Suppose first that $M=2$.
 Up to positive rescaling and translation in the domain $\R\P^1$, there are four lines.
\[
  \begin{picture}(65,62)(-7.5,-12)
     \put(0,0){\includegraphics{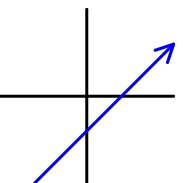}}
     \put(-4.5,-12){$[z:z{-}1:1]$}
   \end{picture}
  \qquad
   \begin{picture}(65,62)(-7.5,-12)
     \put(0,0){\includegraphics{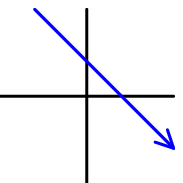}}
     \put(-4.5,-12){$[z:1{-}z:1]$}
   \end{picture}
  \qquad
    \begin{picture}(65,62)(-7.5,-12)
     \put(0,0){\includegraphics{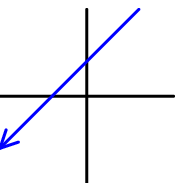}}
     \put(-8.5,-12){$[-z:1{-}z:1]$}
   \end{picture}
  \qquad
    \begin{picture}(65,62)(-7.5,-12)
     \put(0,0){\includegraphics{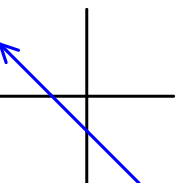}}
     \put(-8.5,-12){$[-z:z{-}1:1]$}
   \end{picture}
\]
 For these, the initial point 
 $p_1$ is $(\pi,\pi)$, $(\pi,0)$, $(0,0)$, and $(0,\pi)$, respectively.
 The four coamoeba chains are, in the fundamental domain $[-\pi,\pi]^2$, 
\[
   \includegraphics{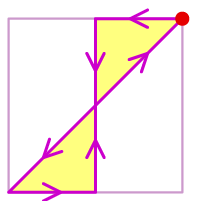}
  \qquad
   \includegraphics{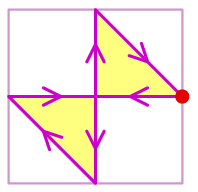}
  \qquad
   \includegraphics{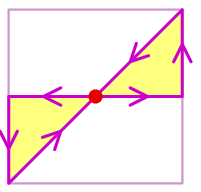}
  \qquad
   \includegraphics{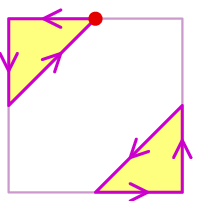}
\]
 and the corresponding zonotope chains are as follows.
\[
   \includegraphics{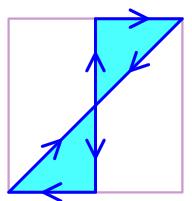}
  \qquad
   \includegraphics{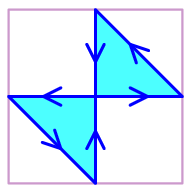}
  \qquad
   \includegraphics{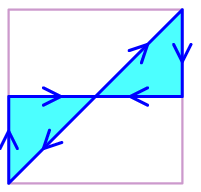}
  \qquad
   \includegraphics{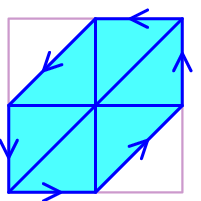}
\]
 For each, the sum $\overline{\calA(\ell)}+Z(\ell)$ of chains is a
 cycle.
 This cycle is homologous to zero for the first three, and it forms the
 fundamental cycle $\be_1\wedge\be_2$ of $\T^2$ for the fourth.

 Now suppose that $M=1$.
 We may assume that $\xi_1=0$.
 Up to positive rescaling there are eight
 possibilities for the parametrization of $\ell$,
 \begin{eqnarray*}
   &[-z:-z:1]\,,\ 
   [ z: z:1]\,,\ 
   [-z: 1:1]\,,\ 
   [ z: 1:1]\,,&\\
   &[ z:-z:1]\,,\ 
   [ z:-1:1]\,,\ 
   [-z:-1:1]\,,\ 
   [-z: z:1]\,.&
 \end{eqnarray*}
 For all of these, the coamoeba is one-dimensional.
 In the first four, the zonotope chain is one-dimensional.
 Table~\ref{T:paths} gives the parametrization, the vertices of the coamoeba of the upper half plane, and
 the path $P(\ell)=\tp_4,\tp_3,\tp_2,\tp_1$ for these four.
\begin{table}[htb]
 \caption{Coamoeba and zonotope chains.}\label{T:paths}
 \begin{tabular}{|r||l|l|}\hline
  \multicolumn{1}{|c||}{$\ell$} &\multicolumn{1}{|c|}{$\calA(\ell)$}&\multicolumn{1}{|c|}{$P(\ell)$}\\\hline
  $[-z:-z:1]$&$(0,0)\,,\,(-\pi,-\pi)$&$(-\pi,-\pi)\,,\,(0,0)\,,\,(\pi,\pi)\,,\,(0,0)$\\\hline
  $[ z: z:1]$&$(\pi,\pi)\,,\,(0,0)  $&$(0,0)\,,\,(-\pi,-\pi)\,,\,(0,0)\,,\,(\pi,\pi)$\\\hline
  $[-z: 1:1]$&$(0,0)\,,\,(-\pi,0)   $&$(-\pi,0)\,,\,(0,0)\,,\,(\pi,0)\,,\,(0,0)$\\\hline
  $[ z: 1:1]$&$(\pi,0)\,,\,(0,0)    $&$(0,0)\,,\,(-\pi,0)\,,\,(0,0)\,,\,(\pi,0)$\\\hline
 \end{tabular}
\end{table}

 The remaining parametrizations are more interesting.
 When $\ell$ is given by $z\mapsto[z:-z:1]$, we have $p_1=(\pi,0)$ and $p_2=(0,-\pi)$, and
 $P(\ell)$ is 
\[
   \tp_4=(0,-\pi)\,,\ \tpp_3=(\pi,0)\,,\ \tp_3=(-\pi,0)\,,\ 
   \tp_2=(0,\pi)\,,\ \tpp_1=(-\pi,0)\,,\quad\mbox{and}\quad\tp_1=(\pi,0)\,,
\]
 and the zonotope chain is shown on the left in Figure~\ref{F:M=1}.
 The path $\tp_4{-}\tpp_3{-}\tp_3{-}\tp_2{-}\tpp_1{-}\tp_1{-}\tp_4$ zig-zags over itself, once
 in each direction, and consequently each triangle is covered twice, once with each
 orientation, and therefore $[Z(\ell)]=0$ in homology.
\begin{figure}[htb]

\begin{picture}(90,89)(-23,-24)
  \put(0,0){\includegraphics{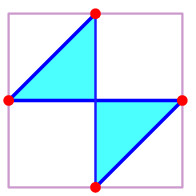}}
  \put( 42,33){$\tp_1=\tpp_3$}   
  \put(-23,15){$\tpp_1=\tp_3$}
  \put(23,-9){$\tp_4$}   \put(23,58){$\tp_2$}
  \put(0,-24){$[ z:-z:1]$}
\end{picture}
\qquad
\begin{picture}(81,89)(-15,-24)
  \put(0,0){\includegraphics{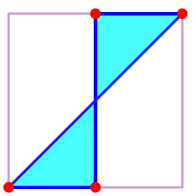}}
  \put(-11,-7){$\tp_3$} \put(57,52){$\tp_1$}
  \put(21,-9){$\tpp_2{\,=\,}\tp_4$}
  \put(-2,58){$\tp_2{\,=\,}\tpp_4$}
  \put(0,-24){$[ z:-1:1]$}
\end{picture}
\qquad
\begin{picture}(81,89)(-15,-24)
  \put(0,0){\includegraphics{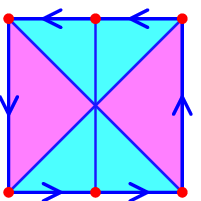}}
  \put(-11,55){$\tpp_4$} \put(-11,-7){$\tp_4$}
  \put(22,-9){$\tp_3$}  \put(57,-7){$\tpp_2$}
  \put(57,52){$\tp_2$}  \put(22,58){$\tp_1$}
  \put(0,-24){$[-z:-1:1]$}
\end{picture}
\qquad
\begin{picture}(120,89)(-32,-24)
  \put(0,0){\includegraphics{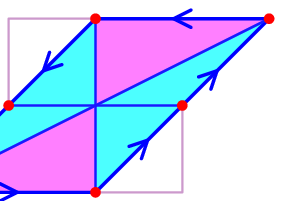}}
  \put(-11,30){$\tp_4$} \put(-36,-3){$\tpp_3$}
  \put(23,-9){$\tp_3$}  \put(56,19){$\tp_2$}
  \put(81,52){$\tpp_1$}  \put(22,58){$\tp_1$}
  \put(0,-24){$[-z: z:1]$}
\end{picture}

\caption{Four more zonotope chains.}\label{F:M=1}

\end{figure}

 When $\ell$ is given by $[z:-1:1]$, we have $p_1=(\pi,\pi)$ and $p_2=(0,\pi)$, and
 $P(\ell)$ is 
\[
   \tpp_4=(0,\pi)\,,\ \tp_4=(0,-\pi)\,,\ \tp_3=(-\pi,-\pi)\,,
   \ \tpp_2=(0,-\pi)\,,\ \tp_2=(0,\pi)\,,\quad\mbox{and}\quad\tp_1=(\pi,\pi)\,,
\]
 and the zonotope chain is shown on the left center of Figure~\ref{F:M=1}.
 As before, each triangle is covered twice, once with each orientation, and therefore
 $[Z(\ell)]=0$ in homology.

 When $\ell$ is given by $[-z:-1:1]$, we have $p_1=(0,\pi)$ and
 $p_2=(-\pi,\pi)$, and $P(\ell)$ is 
\[
   \tpp_4=(-\pi,\pi)\,,\ \tp_4=(-\pi,-\pi)\,,\  \tp_3=(0,-\pi)\,,
   \tpp_2=(\pi,-\pi)\,,\ \tp_2=(\pi,\pi)\,,\quad\mbox{and}\quad  \tp_1=(0,\pi)\,,
\]
 and the zonotope chain is shown on the right center of Figure~\ref{F:M=1}.
 The triangles $\conv(\bO,\tp_2,\tpp_2)$ and $\conv(\bO,\tp_4,\tpp_4)$ are shaded
 differently.
 The zonotope chain is equal to the fundamental cycle of $\T^2$, with the standard
 positive orientation.
 Thus $[Z(\ell)]=\be_1\wedge\be_2$ in homology.

 Finally, when $\ell$ is given by $[-z: z:1]$, we have $p_1=(0,\pi)$ and
 $p_2=(-\pi,0)$, and $P(\ell)$ is 
\[
   \tp_4=(-\pi,0)\,,\ \tpp_3=(-2\pi,-\pi)\,,\  \tp_3=(0,-\pi)\,,
   \tp_2=( \pi,0)\,,\ \tpp_1=( 2\pi, \pi)\,,\quad\mbox{and}\quad  \tp_1=(0,\pi)\,,
\]
 and the zonotope chain is shown on the right of Figure~\ref{F:M=1}.
 Again, $[Z(\ell)]=[\T^2]$.

 Observe that $\calA(\ell)+Z(\ell)$ forms a cycle which is homologous to zero
 unless $\tp_1=(0,\pi)$, in which case it equals the fundamental cycle $\be_1\wedge\be_2$ of $\T^2$.
 \hfill$\QED$
\end{example}

\begin{proof}[Proof of Theorem~$\ref{Th:homology}$]
 We show that the two chains $\overline{\calA(\ell)}$ and $Z(\ell)$ have
 the same boundary, but with opposite orientation, which implies that their sum is a cycle.
 We observed that the boundary of $\calA(\ell)$ lies along the $M{+}1$ circles in which
 the $j$th contains $p_j$ and $p_{j+1}$ (with $p_{M+2}=p_1$) and has direction parallel to
 $\bbf_j$.
 On this $j$th circle the boundary of $\calA(\ell)$ consists of the two semicircles
 oriented from $p_j$ to $p_{j+1}$.

 There are two types of edges forming the boundary of the zonotope cycle $Z(\ell)$.
 The first comes from the edges of $P(\ell)$ with direction $\pm\bbf_j$ connecting
 $\tp_{j+1}$ to $\tpp_j$ and  $\tp_{M+1+j+1}$ to $\tpp_{M+1+j}$, and the second
 comes from edges connecting $\tpp_j$ to $\tp_j$, when $\tpp_j\neq\tp_j$.

 The first type of edge gives a part of the boundary of $Z(\ell)$ which is equal to the
 boundary of $\calA(\ell)$, but with opposite orientation.
 (The edges point from $p_{j+1}$ to $p_j$.)
 The edges of the second type come in pairs which cancel each other.
 Indeed, when $\tp_j\neq\tpp_j$, then the edge from $\tpp_j$ to $\tp_j$ is the directed
 circle connecting $p_j$ with itself and having direction $\pm\bg_j$, which is equal to, but
 opposite from, the edge connecting $\tpp_{M+1+j}$ to $\tp_{M+1+j}$.
 Thus $\overline{\calA(\ell)}+Z(\ell)$ forms a cycle in homology.

 We determine the homology class $[\overline{\calA(\ell)}+Z(\ell)]$ by computing its pushforward
 to each two-dimensional coordinate projection of $\T^N$.
 Let $1\leq i<j\leq N$ be two coordinate directions and consider 
 the projection onto the plane of the coordinates $i$
 and $j$, which is a map $\pr\colon\T^N\to\T^2$.
 The image of $\ell$ under $\pr$ is parametrized by 
 \begin{equation}\label{Eq:proj_param}
  z\ \longmapsto\ 
  [b_i(z)\,:\,b_j(z)\,:\,b_{N+1}(z)]\,.
 \end{equation}

 If $b_i,b_j$, (and $b_{N{+}1}=1$) all vanish at $\xi_{N{+}1}=\infty$, then the image of
 $\ell$ under $\pr$ is a point, and the image of $Z(\ell)$ is either a point or is
 one-dimensional, and so $\pr_*[\calA(\ell)+Z(\ell)]=0$.
 In this case $(\tp_{1,i},\tp_{1,j})$ is either $(0,0)$, $(\pi,0)$, or $(\pi,\pi)$,
 by~\eqref{Eq:inc} and~\eqref{Eq:ptilde}.

 Otherwise, the image of $\ell$ under the
 projection of $\P^N$ to the $(i,j)$-coordinate plane is the line
 $\ell'$ parameterized by~\eqref{Eq:proj_param}.
 It is immediate from the definitions that 
\[
   \pr(\overline{\calA(\ell)})\ = \overline{\calA(\ell')}
    \qquad\mbox{and}\qquad
   \pr(Z(\ell))\ =\ Z(\ell')\,.
\]
 When $b_i$ and $b_j$ have distinct (finite) zeroes, say $\zeta_a$ and $\zeta_b$, then $\pr$ is
 injective on the interior of $\overline{\calA(\ell)}$ and on the edges with directions 
 $\pm\bbf_a$, $\pm\bbf_b$, and $\pm\bbf_{M+1}$ (sending them to edges with directions
 $\pm\be_1$, $\pm\be_2$, and $\pm(\be_1{+}\be_2)$) and collapsing the others
 to points. 
 In the other cases, $\calA(\ell')$ is a circle.
 However, in all cases $\pr$ is one-to-one over the interiors of each triangle in the
 image zonotope cycle  $Z(\ell')$, collapsing the other triangles to line segments or to points.
 Thus 
\[
   \pr_*[\overline{\calA(\ell)}+Z(\ell)]\ =\ 
   [\overline{\calA(\ell')}+Z(\ell')]\,.
\]
 Since the last vertex of the path $P(\ell')$ is $(\tp_{1,i},\tp_{1,j})$, the theorem
 follows from the computation of Example~\ref{Ex:plane}.
\end{proof}

%
\section{Structure of discriminant coamoebas in dimension two}\label{S:three}

Suppose now that $B\subset\Z^2$ is a multiset of $N{+}1$ vectors which span $\R^2$ and
have sum $\bO=(0,0)$.
We use $B=\{\bb_1,\dotsc,\bb_{N{+}1}\}$ to define a rational map $\C^2-\to\C^2$ 
 \begin{equation}\label{Eq:HK}
  z\ \longmapsto\ 
  \Big( \prod_{i=1}^{N+1} \langle \bb_i,z\rangle^{\bb_{i,1}}\,,\, 
        \prod_{i=1}^{N+1} \langle \bb_i,z\rangle^{\bb_{i,2}}\Bigr)\,.
 \end{equation}
Since $\sum_i \bb_i=\bO$, each coordinate is homogenous of degree $0$, and
so~\eqref{Eq:HK} induces a rational map $\Psi_B\colon\P^1\to\P^2$ (where the image has
distinguished coordinates).  
Define \defcolor{$D_B$} to be the image of this map~\eqref{Eq:HK}.
When $B$ consists of distinct vectors that span $\Z^2$, then it is Gale dual to a
set of vectors of the form $(1,\ba)$ for $\ba\in A\subset\Z^{n+2}$.
In this case,~\eqref{Eq:HK} is the Horn-Kapranov parametrization~\cite{K91} of the reduced
$A$-discriminant. 
We use Theorem~\ref{Th:homology} to study the coamoeba \defcolor{$\calA_B$} of $D_B$ and
its complement, for any multiset $B$. 

The results of Section~\ref{S:realline} are applicable because the map~\eqref{Eq:HK} factors,
\[
  \begin{array}{rcrcl}
   \C^2\ni z&\longmapsto&(\langle\bb_1,z\rangle,\langle\bb_2,z\rangle,
               \dotsc,\langle\bb_{N+1},z\rangle)\in\C^{N+1}\\
   &&\C^{N+1}\ni (x_1,x_2,\dotsc,x_{N+1})&\longmapsto&
     {\displaystyle\Big( \prod_{i=1}^{N+1} x_i^{\bb_{i,1}}\,,\, 
           \prod_{i=1}^{N+1} x_i^{\bb_{i,2}}\Bigr)\in\C^2}
  \end{array}
\]
The first map, $\Phi_B$, is linear and the second, $\beta$, is a monomial map.
They induce maps $\P^1\to\P^N-\to\P^2$, with the second a rational map.
Let \defcolor{$\ell_B$} be the image of $\Phi_B$ in $\P^N$, which is a real line as in 
Section~\ref{S:realline}.
The map $\Arg(\beta)$ is the homomorphism $\T^N\to\T^2$ induced by the linear map 
on the universal covers, (also written $\Arg(\beta)$),
 \[
  \Arg(\beta)\ \colon\ \R^N\ \ni\ \be_i\ \longmapsto\ \bb_i\ \in\ \R^2\,,
 \]
and the following is immediate.

\begin{lemma}\label{L:coamoeba_structure}
   The coamoeba $\calA_B$ is the image of the coamoeba $\calA(\ell_B)$ under 
   the map $\Arg(\beta)$.
\end{lemma}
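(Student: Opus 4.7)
The plan is to unwind definitions using the factorization $\Psi_B = \beta \circ \Phi_B$ recorded just before the lemma, together with the observation that the monomial map $\beta$ is intertwined on the open torus with the linear map $\Arg(\beta)$ on phases. Concretely, I would first record the commutative square
\[
  \begin{array}{ccc}
    (\C^*)^{N+1} & \xrightarrow{\ \beta\ } & (\C^*)^2 \\
    \downarrow &  & \downarrow \\
    \T^{N+1} & \xrightarrow{\Arg(\beta)} & \T^2
  \end{array}
\]
in which the vertical maps are $\Arg$. Writing $y_i = r_i e^{\sqrt{-1}\theta_i}$ and computing $\prod_i y_i^{b_{i,k}} = \bigl(\prod_i r_i^{b_{i,k}}\bigr) e^{\sqrt{-1}\sum_i b_{i,k}\theta_i}$ for $k=1,2$ shows that the phase of $\beta(y)$ is precisely $\theta \mapsto \bigl(\sum_i b_{i,1}\theta_i,\sum_i b_{i,2}\theta_i\bigr)$, which is the map on $\T^{N+1}$ induced by the integer linear map $\be_i \mapsto \bb_i$ defining $\Arg(\beta)$. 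Because $\sum_i \bb_i = \bO$, this map vanishes on the diagonal $\Delta(\T) \subset \T^{N+1}$ and so descends to $\T^N = \T^{N+1}/\Delta(\T)$, where $\calA(\ell_B)$ lives.

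Next I would establish the set-theoretic identity $D_B \cap (\C^*)^2 = \beta(\ell_B \cap (\C^*)^N)$, viewing $\beta$ as the rational map $\P^N \dashrightarrow \P^2$ induced by the monomial map. The inclusion $\supseteq$ is the factorization $\Psi_B = \beta \circ \Phi_B$. For $\subseteq$, any $x \in D_B \cap (\C^*)^2$ is $\Psi_B(z) = \beta(\Phi_B(z))$ for some $z \in \P^1$; since a vanishing factor $\langle \bb_i, z\rangle$ with $\bb_i \ne \bO$ forces some coordinate of $\beta(\Phi_B(z))$ to be $0$ or $\infty$, the hypothesis $x \in (\C^*)^2$ implies $\Phi_B(z) \in (\C^*)^{N+1}$, so its class in $\P^N$ lies in $\ell_B \cap (\C^*)^N$.

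Applying $\Arg$ to both sides of this identity and using the commutative square then yields
\[
 \calA_B \;=\; \Arg\bigl(D_B \cap (\C^*)^2\bigr) \;=\; \Arg(\beta)\bigl(\Arg(\ell_B \cap (\C^*)^N)\bigr) \;=\; \Arg(\beta)\bigl(\calA(\ell_B)\bigr),
\]
which is the lemma. There is no substantial obstacle beyond the bookkeeping in the middle step; any $\bb_i = \bO$ in the multiset $B$ contributes a trivial factor to $\beta$ and can be discarded at the outset.
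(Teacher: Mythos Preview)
Your proposal is correct and is precisely the unpacking of why the paper declares the lemma ``immediate'': the factorization $\Psi_B=\beta\circ\Phi_B$ together with the commutation $\Arg\circ\beta=\Arg(\beta)\circ\Arg$ on the open torus gives the result directly. The paper offers no further argument, so your write-up is simply a more explicit version of the same approach.
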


\begin{example}\label{Ex:rational_cubic}
 Let $B$ be the vector configuration $\{(1,0), (-2,1), (1,-2), (0,1)\}$.
 Observe that $\bb_1+\bb_2+\bb_3+\bb_4=0$ and $3\bb_1+2\bb_2+\bb_3=0$, thus $B$ is Gale dual
 to the vector configuration $\{(1,3),(1,2),(1,1),(1,0)\}\subset\{1\}\times\Z$.
 So $A$ is simply $\{0,1,2,3\}$ if we identify $\Z$ with $\{1\}\times\Z$.
We show these two configurations.
\[
  \begin{picture}(63,58)(-10,-4)
   \put(0,0){\includegraphics{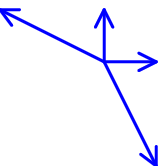}}
   \put( 27,47){$\bb_4$}   \put(47,26){$\bb_1$}
   \put(-3,50){$\bb_2$}   \put(47,-4){$\bb_3$}
   \put(5,5){$B$}
  \end{picture}
   \qquad\qquad
  \begin{picture}(68,59)(-2,0)
   \put(0,30){\includegraphics{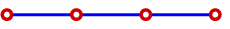}}
   \put(-1.5,23.4){$0$} \put(18.8,23.4){$1$}
   \put(38.8,23.4){$2$} \put(58.8,23.4){$3$}
   \put(29,44){$A$}
  \end{picture}
\]
 Observe that the convex hull of $A$ has volume $d_B=3$.
 
 The map~\eqref{Eq:HK} becomes
\[
   (x,y)\ \longmapsto\ \Bigl( \frac{x(x-2y)}{(y-2x)^2}\,,\,
          \frac{y(y-2x)}{(x-2y)^2}\Bigr)\,,
\]
 whose image is the curve below.
\[
  \begin{picture}(81,100)
   \put(0,0){\includegraphics[height=100pt]{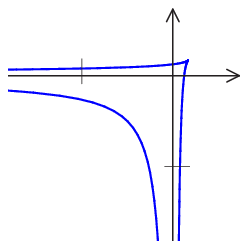}}
   \put(21,76.5){$-2$}   \put(78,29){$-2$}
  \end{picture}
\]
 The line $\ell_B$ is the line of Example~\ref{Ex:tc} and so $\calA_B$ is the image of the
 coamoeba of Figure~\ref{F:linecoamoeba} under the map 
\[
   \Arg(\beta)\ \colon\ (\theta_1,\theta_2,\theta_3)\ \longmapsto\ 
    (\theta_1{-}2\theta_2{+}\theta_3, \theta_2{-}2\theta_3)\,.
\]
 We display this image below, first in the fundamental domain $[\pi,\pi]^2$ of $\T^2$, and
 then in  universal cover $\R^2$ of $\T^2$ (each square is one fundamental domain). 
 \begin{equation}\label{Eq:A-discr}
  \raisebox{-40pt}{\begin{picture}(60,90)
   \put(0,0){\includegraphics{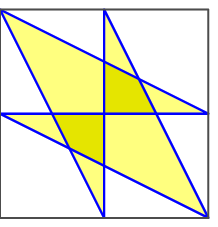}}
   \put(23,2){$\calA_B$}
  \end{picture}
   \qquad\qquad
  \begin{picture}(165,90)(5,0)
   \put(37,0){\includegraphics{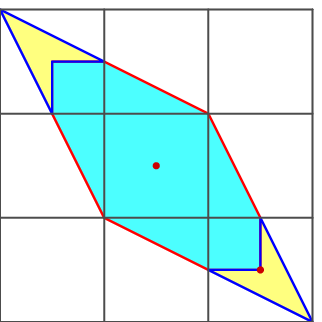}}
   \put(0,2){$\calA_B$}
   \put(20,5){\vector(1,0){92}}
   \put(11,13){\vector(1,2){31}}
   \put(84,46){$0$}
   \put(105,75){$Z_B$} \put(104,78){\vector(-2,-1){30}}
  \end{picture}}
 \end{equation}
 In the picture on the left, the darker shaded regions are where the argument map is
 two-to-one.
 The octagon on the right is the zonotope $Z_B$ generated by $B$ and it is the image of the
 zonotope chain of Figure~\ref{F:ZC} under the map $\Arg(\beta)$. 
 Observe that the union of the coamoeba and the zonotope covers the fundamental domain $d_B=3$
 times.
 \hfill$\QED$
\end{example}

What we observe in this example is in fact quite general.
We first use Lemma~\ref{L:coamoeba_structure} to describe the coamoeba $\calA_B$
more explicitly, then study the zonotope $Z_B$ generated by $B$, before making an
important definition and giving our proof of Theorem~\ref{T:NP}.

The line $\ell_B$ is parametrized by the forms 
$z\mapsto\langle \bb_i,z\rangle$, for $i=1,\dotsc,N{+}1$.
Let $\xi_i\in\R\P^1$ be the zero of the $i$th form,
and suppose these are in  a weakly increasing cyclic order on $\R\P^1$,
\[
   \xi_1\ \leq\ \xi_2\ \leq\ \dotsb\ \leq\ \xi_{N+1}\,.
\]
Next, identify $\P^1\smallsetminus\{\xi_{N+1}\}$ with
$\C$, so that $\xi_{N+1}$ is the point $\infty$ at infinity, and suppose that the
distinct zeroes are
\[
   \zeta_1\ <\ \zeta_2\ <\ \dotsb\ <\ \zeta_M\ <\ \zeta_{M{+}1}\ =\ \infty\,.
\]

By the description of the coamoeba $\calA(\ell_B)$ of
Section~\ref{S:realline} and Lemma~\ref{L:coamoeba_structure}, we see that the coamoeba
$\calA_B$ is composed of two components, each bounded by polygonal paths that are the
images of the boundary of $\calA(\ell_B)$ under the map $\Arg(\beta)$.
For each $j=1,\dotsc,M{+}1$, set
\[
  \defcolor{\bc_j}\ :=\ \Arg(\beta)(\bbf_j)\ =\ \sum_{i\colon \langle\bb_i,\zeta_j\rangle=0} \bb_i\,.
\]
The components of $\calA_B$ correspond to the half planes of $\P^1$, and the boundary
along each is the polygonal path with edges $\pm\pi\bc_1,\dotsc,\pm\pi\bc_{M+1}$ with the
$+$ signs for the upper half plane and $-$ signs for the lower half plane.
The complete description requires the following proposition, which is explained
in~\cite[\S~2]{NP10}. 

\begin{proposition}\label{P:unrammified}
 Suppose that $M>1$.
 Then the composition
\[
   \P^1\smallsetminus\{\zeta_1,\dotsc,\zeta_{M+1}\}
          \xrightarrow{\,\Psi_B\,}D_B\xrightarrow{\,\Arg\,}\calA_B\ \subset\ \T^2
\]
 is an immersion when restricted to $\P^1\smallsetminus\R\P^1$ (in fact it is locally a
 covering map).
\end{proposition}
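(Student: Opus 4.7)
The plan is to compute the Jacobian of $\Arg \circ \Psi_B$ directly using the Horn-Kapranov parametrization and exploit an algebraic identity that will make the immersion property transparent. Parameterize $\P^1$ by $t$, write $w_i(t) = b_{i,1}t + b_{i,2}$, and for $i \in S := \{i : b_{i,1} \neq 0\}$ set $\zeta_i = -b_{i,2}/b_{i,1}$. Writing $P_j(t) = \prod_i w_i(t)^{b_{i,j}}$ for the two components of $\Psi_B$, the logarithmic derivatives are
\[
 u(t) \;:=\; \frac{d}{dt}\log P_1(t) \;=\; \sum_{i \in S} \frac{b_{i,1}}{t - \zeta_i}, \qquad v(t) \;:=\; \frac{d}{dt}\log P_2(t) \;=\; \sum_{i \in S} \frac{b_{i,2}}{t - \zeta_i}.
\]
Because $\Psi_B$ is holomorphic in $t$, a standard calculation shows that the Jacobian determinant of $\Arg \circ \Psi_B$ at $t = x + iy$ equals, up to sign, $\mathrm{Im}(\overline{u(t)}\,v(t))$. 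Thus the proposition reduces to showing this quantity is nonzero for every $t \in \C \smallsetminus \R$ in the domain.

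The key step is the identity $v(t) = -t\,u(t)$. Since $\sum_i \bb_i = \bO$ and $b_{i,1} = 0$ for $i \notin S$, we have $\sum_{i \in S} b_{i,1} = 0$; combined with the factorization $w_i(t) = b_{i,1}(t - \zeta_i)$ for $i \in S$, this gives
\[
 v(t) + t\,u(t) \;=\; \sum_{i \in S} \frac{b_{i,2} + t\,b_{i,1}}{t - \zeta_i} \;=\; \sum_{i \in S} \frac{w_i(t)}{t - \zeta_i} \;=\; \sum_{i \in S} b_{i,1} \;=\; 0.
\]
Therefore $(u(t), v(t)) = u(t) \cdot (1, -t)$ as vectors in $\C^2$, so $\mathrm{Im}(\overline{u(t)}\,v(t)) = -|u(t)|^2\,\mathrm{Im}(t)$. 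For $t \in \C \smallsetminus \R$ this is nonzero precisely when $u(t) \neq 0$, in which case $\Arg \circ \Psi_B$ is an immersion at $t$. Geometrically this is clean: the logarithmic tangent direction to $D_B$ at $\Psi_B(t)$ is $[1:-t] \in \P^1$, which avoids $\R\P^1$ exactly when $t$ is non-real.

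The hard part will be ruling out non-real zeros of $u$. Any such zero $t_0$ would force $v(t_0) = 0$ as well via the identity, so $t_0$ would be a critical point of the Horn-Kapranov parametrization $\Psi_B$ and $\Psi_B(t_0)$ a complex singular point of $D_B$. That the critical locus of $\Psi_B$ is contained in $\R\P^1$---equivalently, that $D_B$ has no complex singular points visible to the parametrization---is the essential nontrivial input, established in~\cite{NP10} and ultimately a consequence of the theory of the logarithmic Gauss map of Kapranov~\cite{K91}, which identifies $\Psi_B$ as the (birational) inverse of the logarithmic Gauss map on $D_B$. Granting this, $\Arg \circ \Psi_B$ is an immersion on all of $(\P^1 \smallsetminus \R\P^1) \smallsetminus \{\zeta_1, \dotsc, \zeta_{M+1}\}$, and being a smooth map between oriented real surfaces of the same dimension it is automatically a local diffeomorphism, hence locally a covering map.
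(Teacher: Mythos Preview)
The paper does not actually prove this proposition; it states only that the result ``is explained in~\cite[\S~2]{NP10}'' and moves on. Your write-up therefore supplies the computation that the citation stands in for, and the reduction you give is correct: the Jacobian of $\Arg\circ\Psi_B$ is (up to sign) $\mathrm{Im}(\overline u\,v)$, the identity $v=-tu$ holds for exactly the reason you indicate, and hence the Jacobian equals $|u(t)|^{2}\,\mathrm{Im}(t)$ up to sign. Your reading of $[u:v]=[1:-t]$ as the logarithmic Gauss map is precisely Kapranov's characterization and is the right conceptual explanation for why the remaining step should hold. Since you and the paper both defer the hard input (no non-real zeros of $u$) to~\cite{NP10}, the two are on equal footing.

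One caveat about scope. Your appeal to Kapranov and to~\cite{NP10} is valid when $B$ is the Gale dual of a genuine configuration $A$, which is the setting of those references. The present paper, however, states the proposition for an arbitrary multiset $B\subset\Z^2$ with $\sum\bb_i=\bO$. In that generality $u$ can acquire non-real zeros: for $B=\{(1,0),(-1,1),(1,-2),(-1,3),(0,-2)\}$ one gets
\[
   u(t)\ =\ \tfrac{1}{t}-\tfrac{1}{t-1}+\tfrac{1}{t-2}-\tfrac{1}{t-3}
   \ =\ \frac{-2(t^{2}-3t+3)}{t(t-1)(t-2)(t-3)}\,,
\]
whose zeros $t=\tfrac{3\pm i\sqrt{3}}{2}$ are non-real, so $\Psi_B$ (and hence $\Arg\circ\Psi_B$) has a genuine critical point off $\R\P^1$. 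This is a wrinkle in the paper's formulation rather than in your argument; the paper's downstream use of the proposition is only to motivate defining the coamoeba chain as the pushforward $\Arg(\beta)_*\overline{\calA(\ell_B)}$, which is well-defined as a chain regardless of whether the map is everywhere an immersion.
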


The edges $\pm\pi\bc_1,\dotsc,\pm\pi\bc_{M+1}$ decompose $\T^2$ into polygonal regions.
Over each polygonal region the map of Proposition~\ref{P:unrammified} has a constant number of
preimages.  
This number of preimages equals the winding number of the polygonal path around that region.
Then the pushforward $\Arg(\beta)_*(\overline{\calA(\ell_B)})$ of the coamoeba chain of 
the line $\ell_B$ is the chain in $\T^2$ where the multiplicity of a region is this number
of preimages/winding number. 
This equals the coamoeba chain of $D_B$.
We will write \defcolor{$\overline{\calA_B}$} for this chain
$\Arg(\beta)_*(\overline{\calA(\ell_B)})$, as our arguments use the pushforward.

There is another natural chain we may define from the vector configuration $B$.
Let $\defcolor{\overline{\bO,\pi\bb_i}}$ be the directed line segment in $\R^2$ connecting the
origin to the endpoint of the vector $\pi\bb_i$.
Let $\defcolor{Z_B}\subset\R^2$ be the Minkowski sum of the line segments
$\overline{\bO,\pi\bb_i}$ for $\bb_i\in B$. 
This is a centrally symmetric zonotope as $\sum_i \bb_i=\bO$.
We will also write $Z_B$ for its image in $\T^2$, considered now as a chain.
For any $\bv\in\R^2$, the points 
\[
   q\ :=\ \sum_{\langle\bb_i,\bv\rangle>0} \bb_i \qquad\mbox{and}\qquad
   q'\ :=\  \sum_{\langle\bb_i,\bv\rangle\geq0} \bb_i
\]
are vertices of $Z_B$ which are extreme in the direction of $\bv$.
These differ only if the line $\R\bv$ represents a zero $\zeta_j$ of one of the
forms, and then the edge between them is $\pi\bd_j$, where
 \begin{equation}\label{Eq:bdj}
   \bd_j\ :=\  \sum_{i\colon \langle\bb_i,\bv\rangle=0}
     \sign(\langle\bb_i,\bw\rangle)\; \bb_i,,
 \end{equation}
where $\bw$ is a vector such that $\langle -\bw,q\rangle>\langle -\bw,q'\rangle$ and
$\sign(x)\in\{\pm1\}$ is the sign of the real number $x$.
Thus $\bd_j$ is the vector parallel to any $\bb_i$ with $\langle\bb_i,\zeta_j\rangle=0$
whose length is the sum of the lengths of these vectors and its direction is such that 
$\langle\bd_j,\bw\rangle>0$.

Starting at a vertex of $Z_B$ and moving, say clockwise, the successive edge vectors will be
the vectors $\{\pm\pi\bd_1,\dotsc,\pm\pi\bd_M,\pm\pi\bd_{M+1}\}$ occuring in a cyclic
clockwise order.
This may be seen on the right in~\eqref{Eq:A-discr}, where $Z_B$ is the octagon.
Its southeastern-most vertex is $\pi\bb_1+\pi\bb_3$ (corresponding to the vector $\bv_1=-\bb_2$,
and the edges encountered from there in clockwise order are
$-\pi\bb_1,\pi\bb_2,-\pi\bb_3, \pi\bb_4, \pi\bb_1,-\pi\bb_2, \pi\bb_3,-\pi\bb_4$.
(Here, $\bd_j=\bb_j$)

Before giving our proof of Theorem~\ref{T:NP}, we make an important definition.
 Let $B=\{\bb_1,\dotsc,\bb_{N+1}\}$ be a multiset of vectors in $\Z^2$ that span
 $\R^2$ and whose sum is $\bO$.
 Write \demph{$\cone(\bb_i,\bb_j)$} for the cone generated by the vectors $\bb_i, \bb_j$.
 Suppose that $\bv$ is any vector in $\R^2$ not pointing in the direction of a vector in $B$,
 and set
 \begin{equation}\label{Eq:dBv}
   \defcolor{d_{B,\bv}}\ :=\ \sum_{\bv\in\cone(\bb_i,\bb_j)} |\bb_i\wedge \bb_j|\ .
 \end{equation}
 Here $|\bb_i\wedge \bb_j|$ is the absolute value of the determinant of the matrix whose columns are
 the two vectors, which is the area of the parallelogram generated by $\bb_i$ and $\bb_j$.

\begin{lemma}\label{L:independent}
 The sum~$\eqref{Eq:dBv}$ is independent of choice of $\bv$.
\end{lemma}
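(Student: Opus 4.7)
The plan is to show that $d_{B,\bv}$ is locally constant on the generic locus and that it does not jump as $\bv$ crosses any of the distinguished rays $\R_{>0}\bb_k$. Observe first that $d_{B,\bv}$ depends only on the direction of $\bv$, and that each cone $\cone(\bb_i,\bb_j)$ has boundary $\R_{\geq 0}\bb_i\cup \R_{\geq 0}\bb_j$. Hence the collection of pairs $\{i,j\}$ with $\bv\in\cone(\bb_i,\bb_j)$ is constant on each connected component of the complement of these finitely many rays, so $d_{B,\bv}$ is locally constant. It therefore suffices to prove that $d_{B,\bv}$ takes the same value on the two components adjacent to each ray $\R_{>0}\bb_k$.

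To analyze such a crossing, let $\bu$ denote the common direction of the ray and pick $\bv_-$ and $\bv_+$ in the generic components just clockwise and counterclockwise of $\bu$. Partition the indices as $K\sqcup L\sqcup I^+\sqcup I^-$, where $K=\{i:\bb_i\in\R_{>0}\bu\}$, $L=\{i:\bb_i\in\R_{<0}\bu\}$, and $I^\pm$ collects those $i$ with $\bb_i$ strictly counterclockwise (resp.\ clockwise) of $\bu$. The only cones whose containment of $\bv$ changes across this ray are those with exactly one generator in $K$ and the other generator in $I^+\cup I^-$: pairs with both generators in $K\cup L$ are degenerate with $\bb_i\wedge\bb_j=0$, while pairs with neither generator in $K$ have boundary rays disjoint from $\R_{>0}\bu$ and are unaffected.

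A direct sign check shows that the cones newly containing $\bv_+$ pair an index in $K$ with one in $I^+$, while those that no longer contain $\bv_+$ pair an index in $K$ with one in $I^-$. Writing $\sum_{i\in K}\bb_i=c_K\bu$ with $c_K>0$, a short computation using the signs of $\bu\wedge\bb_i$ for $i\in I^\pm$ collapses the net change, by bilinearity of $\wedge$, to
\[
  d_{B,\bv_+}-d_{B,\bv_-}\ =\ c_K\,\bu\wedge\Bigl(\sum_{i\in I^+\cup I^-}\bb_i\Bigr)\,.
\]
The hypothesis $\sum_i\bb_i=\bO$ together with $\sum_{K}\bb_i,\sum_L\bb_i\in\R\bu$ forces $\sum_{I^+\cup I^-}\bb_i$ to be parallel to $\bu$ as well, so this wedge vanishes and the lemma follows.

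The main obstacle I anticipate is the sign-bookkeeping in the penultimate step: the definition of $d_{B,\bv}$ uses absolute values $|\bb_i\wedge\bb_j|$, and some care is required to check that the signs $\sgn(\bu\wedge\bb_i)$ for $i\in I^\pm$ align so that the gained and lost contributions combine into the single bilinear expression displayed above. Once this combinatorial step is handled, the vanishing of $\bu$ wedged with a vector parallel to $\bu$ is immediate from $\sum_i\bb_i=\bO$.
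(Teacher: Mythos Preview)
Your proof is correct and follows essentially the same approach as the paper's: both argue that $d_{B,\bv}$ is constant on each region cut out by the rays $\R_{>0}\bb_k$, then show the jump across any such ray vanishes by collapsing the gained-minus-lost contributions into a single wedge $(\sum_{K}\bb_i)\wedge(\sum_{I^+\cup I^-}\bb_i)$ and invoking $\sum_i\bb_i=\bO$. The paper's version is slightly terser (it simply asserts the difference equals this wedge ``up to a sign''), whereas you spell out the sign-bookkeeping that turns the absolute values into a bilinear expression; otherwise the arguments are the same.
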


\begin{proof}
 The rays generated by elements of $B$ divide $\R^2$ into regions.
 The sum~\eqref{Eq:dBv} depends only upon the region containing $\bv$---it is a sum over all 
 cones containing the given region.
 To show its independence of region, let $\bv,\bv'$ lie in adjacent regions with
 $\defcolor{\bu}$ a vector generating the ray separating the regions.
 Suppose that the vectors in $B$ are indexed so that
 $\bb_\kappa,\bb_{\kappa+1},\dotsc,\bb_{\mu-1}$ are the vectors with direction $-\bu$ and
 $\bb_\mu,\bb_{\mu+1},\dotsc,\bb_\lambda$ are the vectors with direction $\bu$.
 Then the sums for $d_{B,\bv}$ and $d_{B,\bv'}$ both include the sum over all cones whose relative
 interior contains $\bu$, but have different terms involving cones with one generator among
 $\bb_\mu,\dotsc,\bb_\lambda$. 
 All such cones appear, and up to a sign, the difference $d_{B,\bv}-d_{B,\bv'}$ is equal to
\begin{multline*}
  \bigl(\bb_\mu +\dotsb+ \bb_\lambda\bigr) \wedge \bigl(\bb_1 +\dotsb+ \bb_{\kappa-1}\ +\
  \bb_{\lambda+1}+\dotsb+ \bb_{N+1}\bigr)\\
   \ =\  \bigl(\bb_\mu +\dotsb +\bb_\lambda\bigr) \wedge (\bb_1+\dotsb+\bb_{N+1})\ =\ 0\,,
\end{multline*}
 which proves the lemma.
\end{proof}

\begin{remark}
 The sum~\eqref{Eq:dBv} is known to coincide with the normalized volume of the convex hull of the
 vector configuration $A$ that is Gale dual to $B$ (see~\cite{DS02}), so Lemma~\ref{L:independent}
 also follows from this fact.
 We will henceforth write \demph{$d_B$} for this volume/sum.
 \hfill$\QED$
\end{remark}

\begin{example}
 Consider the sum~\eqref{Eq:dBv} for the vector configuration $B$ of
 Example~\ref{Ex:rational_cubic}.
 There are four choices for the vector $\bv$ as indicated below
 \begin{equation}\label{Eq:B-and-v}
   \raisebox{-46pt}{\begin{picture}(100,93)(-15,-3)
    \put(0,0){\includegraphics{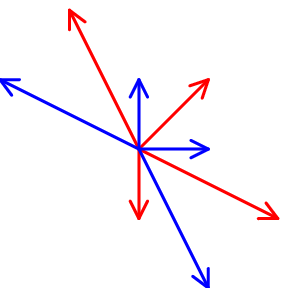}}
    \put( 36,63){$\bb_4$}   \put(62,37){$\bb_1$}
    \put(-13,57){$\bb_2$}   \put(62, -3){$\bb_3$}

    \put( 36,12){$\bv_1$}   \put(81,21){$\bv_2$}
    \put( 61,61){$\bv_3$}   \put( 7,79){$\bv_4$}

   \end{picture}}
 \end{equation}
 The vector $\bv_1$ lies only in $\cone(\bb_2,\bb_3)$, and we have
 $\bb_2\wedge \bb_3=|\begin{smallmatrix}-2&1\\1&-2\end{smallmatrix}|=3$.
 The vector $\bv_2$ lies in $\cone(\bb_3,\bb_1)$ and $\cone(\bb_3,\bb_4)$, and 
 we have $\bb_3\wedge \bb_1+\bb_3\wedge \bb_4=
  |\begin{smallmatrix}1&-2\\1&0\end{smallmatrix}|+
  |\begin{smallmatrix}1&-1\\0&1\end{smallmatrix}|=2+1=3$.
 Similarly, $\bv_3$ lies in $\cone(\bb_3,\bb_4)$, $\cone(\bb_1,\bb_4)$, and $\cone(\bb_1,\bb_2)$,
 and $\bb_3\wedge \bb_4+\bb_1\wedge \bb_4+\bb_1\wedge \bb_2=1+1+1=3$, and the calculation for
 $\bv_4$ is the mirror-image of that for $\bv_2$.
 In every case, $d_{B,\bv_i}=3$, and so $d_B=3$.
 \hfill$\QED$
\end{example}


\begin{theorem}\label{Th:Cycle}
 The sum, $\overline{\calA_B}+Z_B$, of the coamoeba chain of $D_B$ and the 
 $B$-zonotope chain is a cycle in $\T^2$ which equals $d_B[\T^2]$.
\end{theorem}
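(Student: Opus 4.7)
The plan is to apply Theorem~\ref{Th:homology} to $\ell_B$ and push forward along the toric map $\Arg(\beta)$. First I upgrade Lemma~\ref{L:coamoeba_structure} to the zonotope chain: since $\Arg(\beta)\colon\T^N\to\T^2$ is induced by the linear map $\R^N\to\R^2$ sending $\be_i\mapsto\bb_i$, and $Z(\ell_B)$ was constructed as a cone-over-polygon in $\R^N$ whose vertices are integer $\pi$-multiples of sums of $\be_i$, the pushforward $\Arg(\beta)_*(Z(\ell_B))$ is the analogous construction in $\R^2$ with vertices integer $\pi$-multiples of sums of $\bb_i$. Comparing with the explicit description of $Z_B$ near the end of Section~\ref{S:three} (as the Minkowski sum $\sum_i\overline{\bO,\pi\bb_i}$ with boundary edges $\pm\pi\bd_j$) identifies this pushforward with $Z_B$. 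Combined with Lemma~\ref{L:coamoeba_structure} this yields
\[
 \overline{\calA_B}+Z_B \;=\; \Arg(\beta)_*\bigl(\overline{\calA(\ell_B)}+Z(\ell_B)\bigr),
\]
so the left side is automatically a cycle in $\T^2$.

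Next I compute its homology class. The map $\Arg(\beta)_*$ on $H_1(\T^N,\Z)$ sends $\be_i$ to $\bb_i$, so on $H_2$ it sends $\be_i\wedge\be_j$ to $\det(\bb_i,\bb_j)\,[\T^2]$. Combining Theorem~\ref{Th:homology} with~\eqref{Eq:ptilde}, which says $(\tp_{1,i},\tp_{1,j})=(0,\pi)$ iff $\sgn_i=1$ and $\sgn_j=-1$, I obtain
\[
 [\overline{\calA_B}+Z_B]\;=\;\Bigl(\sum_{\substack{i<j\\\sgn_i=1,\,\sgn_j=-1}}\det(\bb_i,\bb_j)\Bigr)[\T^2].
\]

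The remaining step, and expected main obstacle, is the combinatorial identity
\[
 \sum_{\substack{i<j\\ \sgn_i=1,\,\sgn_j=-1}}\det(\bb_i,\bb_j)\;=\;d_B.
\]
In the affine chart of Section~\ref{S:realline} we have $\sgn_i=-\sign(\langle\bb_i,\be_1\rangle)$ and $\bb_{N+1}=\be_2$, so $\sgn_i=1$ places $\bb_i$ in the open left half-plane $\{\langle\bb,\be_1\rangle<0\}$ and $\sgn_j=-1$ places $\bb_j$ in the open right half-plane. My plan is to evaluate $d_B=d_{B,\bv}$ via Lemma~\ref{L:independent} at $\bv$ a small generic perturbation of $-\bb_{N+1}=-\be_2$: for such $\bv$, the condition $\bv\in\cone(\bb_i,\bb_j)$ holds precisely when the short angular arc from $\bb_i$ to $\bb_j$ crosses $-\bb_{N+1}$, which in turn forces $\bb_i$ into the open left half-plane and $\bb_j$ into the open right half-plane. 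Orienting that arc counterclockwise makes $\det(\bb_i,\bb_j)$ positive and equal to $|\bb_i\wedge\bb_j|$. The crux is to verify that this CCW orientation coincides with the index order $i<j$; a short case analysis on where $\bb_i,\bb_j$ lie in the two segments of the $\xi$-cycle cut at $\xi_{N+1}=\infty$ establishes this, using that the cut starts just after $\bb_{N+1}=\be_2$ and traverses $\R\P^1$ clockwise in angle. Degenerate cases (repeated zeroes, vectors parallel to $\bb_{N+1}$, and $M=1$) require only routine bookkeeping guided by Definition~\ref{D:conventions}.
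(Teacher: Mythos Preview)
Your overall strategy is the paper's: push the cycle $\overline{\calA(\ell_B)}+Z(\ell_B)$ forward along $\Arg(\beta)$, invoke Theorem~\ref{Th:homology}, and identify the resulting integer with $d_B$ via Lemma~\ref{L:independent} evaluated at a direction near $-\bb_{N+1}$. The paper does exactly this, using the vector $\bw^\perp=-\bb_{N+1}+\bw/\langle\bw,\bw\rangle$ and the weak clockwise order of the lines $\R\bw^\perp,\R\bb_1,\dotsc,\R\bb_{N+1}$ inherited from~\eqref{Eq:A} to carry out your ``short case analysis.'' So on the combinatorial identity you are on the right track, and your coordinate choice $\bb_{N+1}=\be_2$ is harmless.

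The real gap is in your first step, and it is not one of the ``degenerate cases'' you flag at the end. You assert that $\Arg(\beta)_*\bigl(Z(\ell_B)\bigr)=Z_B$ because the pushforward of a cone-over-polygon is again a cone-over-polygon, and then compare edges. But when $B$ contains parallel vectors (repeated zeroes $\zeta_j$, so $\tpp_j\neq\tp_j$), the image path $\Arg(\beta)\bigl(P(\ell_B)\bigr)$ in $\R^2$ can \emph{backtrack}: the images $\tq_j,\tqp_j,\tq_{j+1}$ are collinear (all the $\bb_i$ with $m_j\le i<m_{j+1}$ are parallel), yet $\tqp_j$ need not lie between $\tq_j$ and $\tq_{j+1}$. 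When it does not, the two image triangles $\conv(\bO,\tqp_j,\tq_j)$ and $\conv(\bO,\tq_{j+1},\tqp_j)$ overlap with opposite orientations, and the set-theoretic image of $\widetilde{Z(\ell_B)}$ strictly contains $Z_B$. Example~\ref{ex:parallel} exhibits precisely this. One must argue that the signed sum of these two triangles collapses to $[\conv(\bO,\tq_{j+1},\tq_j)]$, so that the total pushforward is the fan triangulation of $Z_B$ about its center~$\bO$. The paper does this, and in addition observes that if within each parallel group one orders the shorter subfamily first (so that $\|\bg_j\|\le\|\bbf_j-\bg_j\|$), then $\tqp_j$ \emph{does} lie between its neighbors and $\Arg(\beta)$ is injective on $\widetilde{Z(\ell_B)}$; this injectivity is what upgrades the homology computation to the chain equality the theorem asserts. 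Your ``comparing with the explicit description of $Z_B$'' does not supply either argument.
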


\begin{proof}
 We will show that $\Arg(\beta)_*[Z(\ell_B)]=[Z_B]$, which implies that 
\[
   [\overline{\calA_B}+Z_B]\ =\ \Arg(\beta)_*[\overline{\calA(\ell_B)} + Z(\ell_B)]
\]
 is a cycle, as $\Arg(\beta)_*[\overline{\calA(\ell_B)}]=[\overline{\calA_B}]$.
 Since  $\Arg(\beta)_*(\be_i\wedge\be_j)=\bb_i\wedge\bb_j\cdot[\T^2]$, the formula of
 Theorem~\ref{Th:homology} will give us the homology class of $[\overline{\calA_B}+Z_B]$.
 We will use~\eqref{Eq:dBv} and Lemma~\ref{L:independent} to show that it equals
 $d_B[\T^2]$.
 This will imply the theorem as we will show that there is an ordering
 of the vectors $B$ such that the map $\Arg(\beta)\colon Z(\ell_B)\to Z_B$ in the
 universal covers $\R^N\to\R^2$ is injective.  

 Recall that $\xi_1,\dotsc,\xi_{N+1}$ are points of $\R\P^1$ with $\langle \bb_i,\xi_i\rangle=0$
 and $\zeta_1,\dotsc,\zeta_{M+1}$ are the distinct points among them.
 Let $\bO\neq\defcolor{\bv}\in\R^2$ represent $\xi_{N+1}=\zeta_{M+1}$ 
 (so that $\langle\bb_{N+1},\bv\rangle=0$)
 and choose $x\in\R^2$ to be a point with $\langle\bb_{N+1},x\rangle=1$.
 Then $t\mapsto x+t\bv$ gives a parametrization of $\R\P^1$ with $\infty=\zeta_{M+1}$, and 
 identifies $\R$ with $\R\P^1\smallsetminus\{\infty\}$.
 
 To agree with Definition~\ref{D:conventions}, we suppose that the points
 of $B$ are ordered so that~\eqref{Eq:A} and~\eqref{Eq:B} hold.
 Thus there are integers
 $1=m_1<\dotsb<m_{M+1}<m_{M+2}=N{+}2$ such that
\[
   \langle \bb_i,\zeta_j\rangle=0\ 
   \Longleftrightarrow\ m_j\leq i<m_{j+1}\,.
\]
 We further suppose that $B$ is ordered so that one of~\eqref{Eq:inc}
 or~\eqref{Eq:dec} holds for every $j=1,\dotsc,M{+}1$.
 Specifically, let $\bw:=x+\tau \bv$ for some fixed $\tau<\zeta_1$.
 Then there exist integers $n_1,\dotsc,n_{M+1}$ such that for each $j=1,\dotsc,M{+}1$
 we have $m_j< n_j\leq m_{j+1}$ and either
 \begin{eqnarray*}
  \langle\bb_{m_j},\bw\rangle\,,\ \dotsc\,,\ \langle\bb_{n_j-1},\bw\rangle
   &<\ 0\ <&
  \langle\bb_{n_j},\bw\rangle\,,\ \dotsc\,,\ \langle\bb_{m_{j+1}-1},\bw\rangle\,,
    \makebox[.1in][l]{\qquad or}\\
  \langle\bb_{m_j},\bw\rangle\,,\ \dotsc\,,\ \langle\bb_{n_j-1},\bw\rangle
   &>\ 0\ >&
  \langle\bb_{n_j},\bw\rangle\,,\ \dotsc\,,\ \langle\bb_{m_{j+1}-1},\bw\rangle\,.
 \end{eqnarray*}
 For $i=1,\dotsc,N{+}1$, let $\sgn_i\in\{\pm 1\}$ be the sign of $\langle\bb_i,\bw\rangle$.
 Note that $\sgn_{N+1}=1$.

 Define $\bbf_j,\bg_j,\bh_j$ as in Definition~\ref{D:conventions},
\[
  \bbf_j\ :=\ \sum_{i=m_j}^{m_{j+1}-1} \be_i\,,\qquad
  \bg_j\  :=\ \sum_{i=m_j}^{n_{j}-1} \be_i\,,\qquad\mbox{and}\qquad
  \bh_j\  :=\ \sum_{i=m_j}^{m_{j+1}-1} \sgn_i\be_i\,.
\]

 Consider now the following affine parametrization of $\ell_B\subset\P^N$,
\[
  \Phi_B\ \colon\ t\ \longmapsto\ 
   [\langle\bb_1,x+t\bv\rangle\ \colon 
    \dotsb\ \colon\
     \langle\bb_N,x+t\bv\rangle\ \colon\ \langle\bb_{N+1},x+t\bv\rangle=1]\,.
\]
 Let $\tp_1\in\{0,\pi\}^N\in\R^N$ be the point whose $i$th coordinate is 
  $\arg(\sgn_i)$.
 Its image $p_1\in\T^N$ is the point on the coamoeba of $\ell_B$ coming from 
 the real points $\Phi_B(-\infty,\zeta_1)$.

 We describe $\Arg(\beta)(Z(\ell_B))$ in the universal cover $\R^2$ of $\T^2$.
 For each $j=1,\dotsc,2M{+}2$, set $\defcolor{\tq_j}:=\Arg(\beta)(\tp_j)$ and
 $\defcolor{\tqp_j}:=\Arg(\beta)(\tpp_j)$.
 Since 
 \begin{equation}\label{Eq:def_p1tilde}
    \tp_{1,i}\ =\ 
    \left\{ \begin{array}{rcl} \pi &&\mbox{if }\langle\bb_i,\bw\rangle<0\\
              0 &&\mbox{if }\langle\bb_i,\bw\rangle>0
    \end{array}\right.\ ,
 \end{equation}
 we have 
\[
   \tq_1\ =\ \pi\cdot\sum_{\langle\bb_i,\bw\rangle<0}\bb_i\ ,
\]
 and so $\tq_1$ is a vertex of $Z_B$ which is extreme in the direction of $-\bw$.

 The zonotope chain $Z(\ell_B)$ is a union of the triangles 
 \begin{equation}\label{Eq:ZC_triangles}
   \conv(\bO,\tp_{j+1},\tpp_j)
    \qquad\mbox{and}\qquad
   \conv(\bO,\tpp_j,\tp_j)
    \qquad\mbox{for}\ j=M{+}2,\dotsc,1\,,
 \end{equation}
 where the second is degenerate if $\tp_j=\tpp_j$.
 Thus $\Arg(\beta)(Z(\ell_B))$ will be the union of the (possibly degenerate) triangles 
 \begin{equation}\label{Eq:Zone_triangles}
   \conv(\bO,\tq_{j+1},\tqp_j)
    \qquad\mbox{and}\qquad
   \conv(\bO,\tqp_j,\tq_j)
    \qquad\mbox{for}\ j=M{+}2,\dotsc,1\,,
 \end{equation}
 For $j\leq M{+}1$, $\tp_{j+1}=\tp_j+\pi\bh_j$, so
\[
   \tq_{j+1}\ =\ \tq_j+\pi\Arg(\beta)(\bh_j)\ =\ 
    \tq+\pi\bd_j\,,
\]
 which we see by~\eqref{Eq:bdj} (with the vector $\bw=x+\tau\bv$) and our definition of
 $\sgn_i$.
 If we fix the orientation so that $\bv$ is clockwise of $\bb_{N+1}$, then 
 by our choice of ordering of the zeroes $\zeta_j$, the lines $\R\bd_1, \dotsc,\R\bd_{M+1}$
 occur in clockwise order.
 Since $\langle\bd_j,\bw\rangle>0$ and $\tq_1$ is extreme in the direction of $-\bw$, 
 the vectors $\pi\bd_1,\dotsc,\pi\bd_{M{+}1}$ will form the edges of the zonotope 
 starting at $\tq_1$ and moving clockwise.
 It follows from the discussion following~\eqref{Eq:bdj} that $\tq_1,\dotsc,\tq_{2M+2}$ form
 the vertices of the zonotope $Z_B$. 
 This implies that no $\tq_j$ coincides with the origin $\bO$.

 All that remains is to understand the two triangles~\eqref{Eq:Zone_triangles}
 for those $j$ when $\tqp_j\neq\tq_j$.
 In this case, $\tpp_j=\tp_j+2\pi\sgn_{m_j}\bg_j$, and so 
\[
  \tqp_j\ =\ \tp_j\ +\  2\pi\sgn_{m_j}\sum_{i=m_j}^{n_j-1} \bb_i\ =\ 
   \tp_j\ +\  2\pi\sum_{i=m_j}^{n_j-1} \sgn_i\bb_i\,.
\]
 Since $\bb_{m_j},\dotsc,\bb_{m_{j+1}-1}$ are parallel, $\tq_j,\tqp_j$, and
 $\tq_{j+1}$ are collinear.
 This implies that 
\[
   \Arg(\beta)_* [ \conv(\bO,\tpp_j,\tp_j) + \conv(\bO,\tp_{j+1},\tpp_j)]\ =\ 
   [\conv(\bO,\tq_{j+1},\tq_j)]\,,
\]
 which shows that $\Arg(\beta)_*[Z(\ell_B)]=[Z_B]$.

 Indeed, if $\tqp_j$ lies between $\tq_j$ and $\tq_{j+1}$ then $\Arg(\beta)$ 
 preserves the orientation of the triangles~\eqref{Eq:ZC_triangles} and is therefore
 injective over their images, whose union is $\conv(\bO,\tq_{j+1},\tq_j)$.
 Otherwise, the two triangles~\eqref{Eq:Zone_triangles} have opposite orientations and 
\[
    \conv(\bO,\tqp_j,\tq_j)\ \supset\ \conv(\bO,\tq_{j+1},\tqp_j)\,,
\]
 so that $\Arg(\beta)_* [ \conv(\bO,\tpp_j,\tp_j) + \conv(\bO,\tp_{j+1},\tpp_j)]$
 equals
\[
  [ \conv(\bO,\tqp_j,\tq_j)]-[ \conv(\bO,\tq_{j+1},\tqp_j)]
  \ =\ [\conv(\bO,\tq_{j+1},\tq_j)]\,.
\]

 Theorem~\ref{Th:homology}, Equation~\eqref{Eq:def_p1tilde}, and 
 $\Arg(\beta)_*(\be_i\wedge \be_j)=\bb_i\wedge \bb_j\cdot[\T^2]$, show that 
 \[
   \Arg(\beta)_*[\overline{\calA(\ell_B)}+Z(\ell_B)]\ =\  [\T^2]\cdot
   \sum_{\substack{1\leq i<j\leq N\\\langle\bb_i,\bw\rangle>0>\langle\bb_j,\bw\rangle}} 
     \bb_i\wedge \bb_j\,.
 \]
 We will show that this equals $d_B[\T^2]$.
 Observe that if $\bb_i$ and $\bb_j$ are parallel, then $\bb_i\wedge\bb_j=0$ and they do not
 contribute to the sum.
 We will consider the sum with the restriction that the vectors $\bb_i$ and $\bb_j$ are not parallel.

 Set $\defcolor{\bw^\perp}:=-\bb_{N+1}+\bw/\langle\bw,\bw\rangle$, which is orthogonal to
 $\bw$.
 Suppose that $\bv$ is clockwise of $\bb_{N+1}$, as below.
\[
  \begin{picture}(93,104)(-13,-20)
   \put(-10,-10){\includegraphics{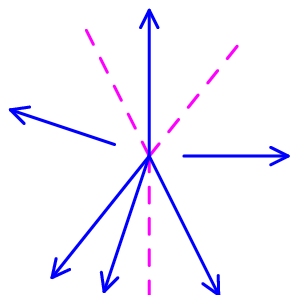}}
   \put(-1,-12){$\bb_i$} \put(58,-13){$\bb_j$}
   \put(25,78){$\bb_{N+1}$}  \put(61,33){$\bv$}
   \put(-13,32){$\bw$}   \put(16,-18){$\bw^\perp$}
  \end{picture}
\]
 By our choice of $\bw$, the lines $\R\bw^\perp,\R\bb_1,\dotsc,\R\bb_{N+1}$ occur in weak
 clockwise order with $\R\bw^\perp$ distinct from the rest.
 Suppose now that $1\leq i<j\leq N$ where 
 \begin{equation}\label{Eq:condition}
    \langle\bb_i,\bw\rangle\ >\ 0\ >\ \langle\bb_j,\bw\rangle\,,
 \end{equation}
 and $\bb_i$ and $\bb_j$ are not parallel.
 The cone spanned by $\bb_i$ and $\bb_j$ meets a half ray of $\R\bw^\perp$, with $\bb_i$
 to the left of $\R\bw^\perp$ and $\bb_j$ to the right of $\R\bw^\perp$, by~\eqref{Eq:condition}.
 Since $\R\bw^\perp,\R\bb_i$, and $\R\bb_j$ occur in clockwise order, we must have that 
 $\bw^\perp\in\cone(\bb_i,\bb_j)$, which shows that 
 \[
   \sum_{\substack{1\leq i<j\leq N\\\langle\bb_i,\bw\rangle<0<\langle\bb_j,\bw\rangle}} 
     \bb_i\wedge \bb_j
   \ =\ 
   \sum_{\substack{1\leq i<j\leq N\\\bw^\perp\in\cone(\bb_i,\bb_j)}} 
     \bb_i\wedge \bb_j\ =\ d_{B,\bw^\perp}\ =\ d_B\,.
 \]
 The sum equals $d_{B,\bw^\perp}$ because if $\bb_j$ is counter clockwise from $\bb_i$
 by~\eqref{Eq:condition} and the condition that $\bw^\perp\in\cone(\bb_i,\bb_j)$ with
 $i<j$.
 Thus $\bb_i\wedge\bb_j>0$.

 We complete the proof by noting that $\tqp_j$ will lie between $\tq_j$ and $\tq_{j+1}$
 if either $n_j=m_{j+1}$, so that $\bg_j=\bbf_j$, or if 
\[
   \|\bg_j\|\ =\ \|\sum_{i=m_j}^{n_j-1}\bb_j\|
  \ =\ \sum_{i=m_j}^{n_j-1}\|\bb_j\|\ \leq\ 
   \sum_{i=n_j}^{m_{j+1}-1}\|\bb_j\|\ =\ \|\bbf_j-\bg_j\|\,,
\]
 as $\bb_{m_j},\dotsc,\bb_{n_j-1}$ have the same direction which is opposite to the (common)
 direction of $\bb_{n_j},\dotsc,\bb_{m_{j+1}-1}$.
 If this does not occur for our given order, then we simply reverse the vectors
 $\bb_{m_j},\dotsc,\bb_{m_{j+1}-1}$, replacing $\bg_j$ with $\bbf_j-\bg_j$.
\end{proof}

\begin{example}\label{ex:parallel}
 The last point in the proof about the injectivity of 
\[
   \Arg(\beta)\ \colon\ Z(\ell_B)\ \longrightarrow\ Z_B
\]
 (and more generally the arguments when $B$ has parallel vectors)
 is geometrically subtle.
 We expose this subtlety in the following two examples.
 Suppose that $B$ consists of the vectors $(1,0),(0,1),(-2,-2)$, and $(1,1)$, 
\[
    \includegraphics{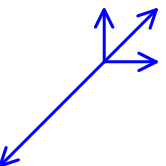}
\]
 When $\bv=(1,-1)$ and $x=(\frac{1}{2},\frac{1}{2})$, then $\ell_B$ has the parametrization 
 \begin{equation}\label{Eq:first_P}
   z\ \longmapsto\ [ \tfrac{1}{2}+z\;:\; \tfrac{1}{2}-z\;:\; -2\;:\; 1]\,,
 \end{equation}
 which is the second line in our running Examples~\ref{Ex:CoArepeat},~\ref{Ex:CoZrepeat},
 and~\ref{Ex:homology_class}. 
 In this case the image $\Arg(\beta)(Z(\ell_B))$ is shown on the left of Figure~\ref{F:images}.
 It is superimposed over a fundamental domain and dashed lines $\theta_1,\theta_2=n\pi$ for
 $n\in\Z$. 
 The segments $\tq_3,\tqp_2$ and $\tq_6,\tqp_5$ are covered in both directions as
 $\Arg(\beta)(P(\ell_B))$ backtracks over these segments.
 In fact, the triangles
\[
   \conv(\bO,\tq_3,\tqp_2)\qquad\mbox{and}\qquad
   \conv(\bO,\tq_6,\tqp_5)
\]
 have orientation opposite of the other triangles.
 The medium shaded parts (near $\tqp_2$ and $\tqp_5$) are covered twice and the darker shaded
 parts near $\bO$ are covered thrice.
\begin{figure}[htb]

 \begin{picture}(177,126)(-11,-11)
  \put(0,0){\includegraphics{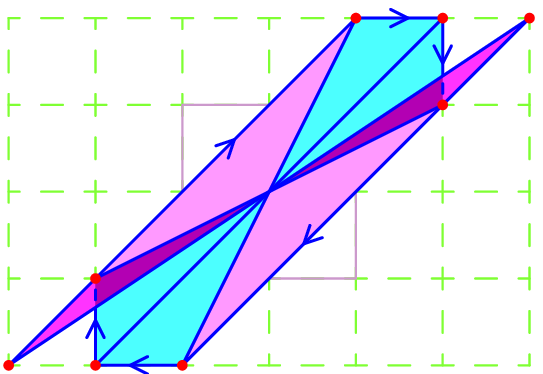}}

  \put( 71, 52){$\bO$}
  \put( 23,-10){$\tq_1$}  \put( 48,-10){$\tq_2$}
  \put(156,102){$\tqp_2$} \put(129, 67){$\tq_3$}
  \put(129,107){$\tq_4$}  \put( 93,107){$\tq_5$}
  \put(-11, -3){$\tqp_5$} \put( 16, 32){$\tq_6$}

 \end{picture}
 \qquad
 \begin{picture}(177,126)(-11,-11)
  \put(0,0){\includegraphics{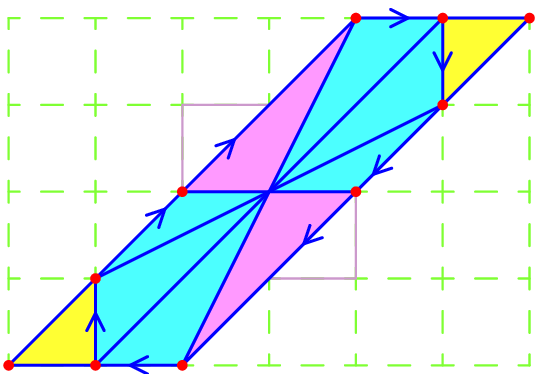}}

  \put( 71, 54){$\bO$}
  \put( 23,-10){$\tq_1$}  \put( 48,-10){$\tq_2$}
  \put(104, 42){$\tqp_2$} \put(129, 67){$\tq_3$}
  \put(129,107){$\tq_4$}  \put( 93,107){$\tq_5$}
  \put( 39, 57){$\tqp_5$} \put( 16, 32){$\tq_6$}
  \thicklines
  \put(130,11){\White{\vector(-1,0){114}}} \put(130,11){\White{\vector(-1,0){115}}}
  \put(138,21){\White{\vector(0,1){78}}}  \put(138,22){\White{\vector(0,1){78}}}

  \thinlines
  \put(130,11){\vector(-1,0){113}}
  \put(138,21){\vector(0,1){77}}
  \put(132,9){$\calA_B$}
 \end{picture}

 \caption{Images of $\Arg(\beta)(Z(\ell_B))$}
 \label{F:images}
\end{figure}

Now suppose that the vectors in $B$ are in the order $(1,0)$, $(1,1)$, $(-2,-2)$, and $(0,1)$,
and $v=(-1,0)$ and $x=(0,1)$.
Then $\ell_B$ is parametrized by
\[
   z\ \longmapsto\ [-z\;:\; 1-z \;:\; 2z-2 \;:\; 1]\,,
\]
In this case the image $\Arg(\beta)(Z(\ell_B))$ is equal to the zonotope
$Z_B$, and is shown on the right of Figure~\ref{F:images}, together with the
coamoeba $\calA_B$. 
As explained in the proof of Theorem~\ref{Th:Cycle}, the image equals the zonotope because
in the pair of parallel vectors $(1,1)$ and $(-2,-2)$, the shorter comes first in this
case, while in the previous case, the shorter one came second.

In both cases (which are just different parametrizations of the same line)
$\Arg(\beta)_*[Z(\ell_B)]=[Z_B]$ as shown in the proof of
Theorem~\ref{Th:Cycle}, and the coamoebas coincide.
Furthermore, $[\overline{\calA_B}+Z_B]=2[\T^2]$ for both, as $d_B=2$.
\hfill\QED
\end{example}

\def\cprime{$'$}
\providecommand{\bysame}{\leavevmode\hbox to3em{\hrulefill}\thinspace}
\providecommand{\MR}{\relax\ifhmode\unskip\space\fi MR }
\providecommand{\MRhref}[2]{%
  \href{http://www.ams.org/mathscinet-getitem?mr=#1}{#2}
}
\providecommand{\href}[2]{#2}

\end{document}